\newtheorem{thm}[equation]{Theorem}
\numberwithin{equation}{section}
\newtheorem{cor}[equation]{Corollary}
\newtheorem{defin}[equation]{Definition}
\newtheorem{prop}[equation]{Proposition}
\newtheorem{fig}[equation]{Figure}
\begin{document}
\raggedbottom \voffset=-.7truein \hoffset=0truein \vsize=8truein
\hsize=6truein \textheight=8truein \textwidth=6truein
\baselineskip=18truept

\def\mapright#1{\ \smash{\mathop{\longrightarrow}\limits^{#1}}\ }
\def\mapleft#1{\smash{\mathop{\longleftarrow}\limits^{#1}}}
\def\mapup#1{\Big\uparrow\rlap{$\vcenter {\hbox {$#1$}}$}}
\def\mapdown#1{\Big\downarrow\rlap{$\vcenter {\hbox {$\ssize{#1}$}}$}}
\def\mapne#1{\nearrow\rlap{$\vcenter {\hbox {$#1$}}$}}
\def\mapse#1{\searrow\rlap{$\vcenter {\hbox {$\ssize{#1}$}}$}}
\def\mapr#1{\smash{\mathop{\rightarrow}\limits^{#1}}}
\def\ss{\smallskip}
\def\vp{v_1^{-1}\pi}
\def\at{{\widetilde\alpha}}
\def\sm{\wedge}
\def\la{\langle}
\def\ra{\rangle}
\def\ev{\text{ev}}
\def\od{\text{od}}
\def\on{\operatorname}
\def\ol#1{\overline{#1}{}}
\def\spin{\on{Spin}}
\def\cat{\on{cat}}
\def\lbar{\ell}
\def\qed{\quad\rule{8pt}{8pt}\bigskip}
\def\ssize{\scriptstyle}
\def\a{\alpha}
\def\bz{{\Bbb Z}}
\def\Rhat{\hat{R}}
\def\im{\on{im}}
\def\ct{\widetilde{C}}
\def\ext{\on{Ext}}
\def\sq{\on{Sq}}
\def\eps{\epsilon}
\def\ar#1{\stackrel {#1}{\rightarrow}}
\def\br{{\bold R}}
\def\bC{{\bold C}}
\def\bA{{\bold A}}
\def\bB{{\bold B}}
\def\bD{{\bold D}}
\def\bh{{\bold H}}
\def\bQ{{\bold Q}}
\def\bP{{\bold P}}
\def\bx{{\bold x}}
\def\bo{{\bold{bo}}}
\def\si{\sigma}
\def\Vbar{{\overline V}}
\def\dbar{{\overline d}}
\def\wbar{{\overline w}}
\def\Sum{\sum}
\def\tfrac{\textstyle\frac}
\def\tb{\textstyle\binom}
\def\Si{\Sigma}
\def\w{\wedge}
\def\equ{\begin{equation}}
\def\b{\beta}
\def\G{\Gamma}
\def\L{\Lambda}
\def\g{\gamma}
\def\k{\kappa}
\def\ahat{\widehat{a}}
\def\psit{\widetilde{\Psi}}
\def\tht{\widetilde{\Theta}}
\def\psiu{{\underline{\Psi}}}
\def\thu{{\underline{\Theta}}}
\def\aee{A_{\text{ee}}}
\def\aeo{A_{\text{eo}}}
\def\aoo{A_{\text{oo}}}
\def\aoe{A_{\text{oe}}}
\def\vbar{{\overline v}}
\def\endeq{\end{equation}}
\def\sn{S^{2n+1}}
\def\zp{\bold Z_p}
\def\cR{{\mathcal R}}
\def\cS{{\mathcal S}}
\def\cD{{\mathcal D}}
\def\cj{{\cal J}}
\def\zt{{\bold Z}_2}
\def\bs{{\bold s}}
\def\bof{{\bold f}}
\def\bq{{\bold Q}}
\def\be{{\bold e}}
\def\Hom{\on{Hom}}
\def\ker{\on{ker}}
\def\kot{\widetilde{KO}}
\def\coker{\on{coker}}
\def\da{\downarrow}
\def\colim{\operatornamewithlimits{colim}}
\def\zphat{\bz_2^\wedge}
\def\io{\iota}
\def\om{\omega}
\def\Prod{\prod}
\def\e{{\cal E}}
\def\zlt{\Z_{(2)}}
\def\exp{\on{exp}}
\def\abar{{\overline a}}
\def\xbar{{\overline x}}
\def\ybar{{\overline y}}
\def\zbar{{\overline z}}
\def\mbar{{\overline m}}
\def\nbar{{\overline n}}
\def\sbar{{\overline s}}
\def\kbar{{\overline k}}
\def\bbar{{\overline b}}
\def\et{{\widetilde E}}
\def\ni{\noindent}
\def\tsum{\textstyle \sum}
\def\coef{\on{coef}}
\def\den{\on{den}}
\def\lcm{\on{l.c.m.}}
\def\vi{v_1^{-1}}
\def\ot{\otimes}
\def\psibar{{\overline\psi}}
\def\thbar{{\overline\theta}}
\def\mhat{{\hat m}}
\def\exc{\on{exc}}
\def\ms{\medskip}
\def\ehat{{\hat e}}
\def\etao{{\eta_{\text{od}}}}
\def\etae{{\eta_{\text{ev}}}}
\def\dirlim{\operatornamewithlimits{dirlim}}
\def\gt{\widetilde{L}}
\def\lt{\widetilde{\lambda}}
\def\St{\widetilde{S}}
\def\ft{\widetilde{f}}
\def\sgd{\on{sgd}}
\def\lfl{\lfloor}
\def\rfl{\rfloor}
\def\ord{\on{ord}}
\def\gd{{\on{gd}}}
\def\rk{{{\on{rk}}_2}}
\def\nbar{{\overline{n}}}
\def\MC{\on{MC}}
\def\lg{{\on{lg}}}
\def\cH{\mathcal{H}}
\def\cR{\mathcal{R}}
\def\cS{\mathcal{S}}
\def\cE{\mathcal{E}}
\def\cP{\mathcal{P}}
\def\N{{\Bbb N}}
\def\Z{{\Bbb Z}}
\def\Q{{\Bbb Q}}
\def\R{{\Bbb R}}
\def\C{{\Bbb C}}
\def\l{\left}
\def\r{\right}
\def\interior{\on{int}}
\def\mo{\on{mod}}
\def\xt{\times}
\def\notimm{\not\subseteq}
\def\Remark{\noindent{\it  Remark}}
\def\kut{\widetilde{KU}}
\def\newd{\mathrm{d}}
\def\GC{\mathrm{GC}}
\def\*#1{\mathbf{#1}}
\def\0{$\*0$}
\def\1{$\*1$}
\def\22{$(\*2,\*2)$}
\def\33{$(\*3,\*3)$}
\def\ss{\smallskip}
\def\ssum{\sum\limits}
\def\dsum{\displaystyle\sum}
\def\la{\langle}
\def\ra{\rangle}
\def\on{\operatorname}
\def\od{\text{od}}
\def\ev{\text{ev}}
\def\o{\on{o}}
\def\U{\on{U}}
\def\lg{\on{lg}}
\def\a{\alpha}
\def\bz{{\Bbb Z}}
\def\eps{\varepsilon}
\def\bc{{\bold C}}
\def\bN{{\bold N}}
\def\nut{\widetilde{\nu}}
\def\tfrac{\textstyle\frac}
\def\b{\beta}
\def\G{\Gamma}
\def\g{\gamma}
\def\zt{{\Bbb Z}_2}
\def\zth{{\bold Z}_2^\wedge}
\def\by{{\bold y}}
\def\bx{{\bold x}}
\def\bof{{\bold f}}
\def\bq{{\bold Q}}
\def\be{{\bold e}}
\def\lline{\rule{.6in}{.6pt}}
\def\xb{{\overline x}}
\def\xbar{{\overline x}}
\def\ybar{{\overline y}}
\def\zbar{{\overline z}}
\def\ebar{{\overline \be}}
\def\nbar{{\overline n}}
\def\ubar{{\overline u}}
\def\bbar{{\overline b}}
\def\et{{\widetilde e}}
\def\lf{\lfloor}
\def\rf{\rfloor}
\def\ni{\noindent}
\def\ms{\medskip}
\def\xhat{{\widehat x}}
\def\what{{\widehat w}}
\def\Yhat{{\widehat Y}}
\def\abar{{\overline{a}}}
\def\minp{\min\nolimits'}
\def\mul{\on{mul}}
\def\N{{\Bbb N}}
\def\Z{{\Bbb Z}}
\def\S{\Sigma}
\def\Q{{\Bbb Q}}
\def\R{{\Bbb R}}
\def\C{{\Bbb C}}
\def\notint{\cancel\cap}
\def\cS{\mathcal S}
\def\cRt{\widehat{\mathcal R}}
\def\cR{\mathcal R}
\def\el{\ell}
\def\TC{\on{TC}}
\def\wgt{\on{wgt}}
\def\wpt{\widetilde{p_2}}
\def\dstyle{\displaystyle}
\def\Om{\Omega}
\def\ds{\dstyle}
\def\tz{tikzpicture}
\def\sx{{[\![6]\!]}}
\def\sxk{{[\![6]\!]-\{k\}}}
\def\zcl{\on{zcl}}
\def\Vb#1{{\overline{V_{#1}}}}
\title
{The geodesic complexity of $n$-dimensional Klein bottles}
\author{Donald M. Davis}
\address{Department of Mathematics, Lehigh University\\Bethlehem, PA 18015, USA}
\email{dmd1@lehigh.edu}
\author{David Recio-Mitter}
\address{Department of Mathematics, Lehigh University\\Bethlehem, PA 18015, USA}
\email{dar318@lehigh.edu}
\date{December 16, 2019}

\keywords{geodesic,  topological complexity, Klein bottle, polytope}
\thanks {2000 {\it Mathematics Subject Classification}: 53C22, 55M30, 68T40.}

\maketitle
\begin{abstract} The geodesic complexity of a metric space $X$ is the smallest $k$ for which there is a partition of $X \times X$ into ENRs $E_0,\ldots,E_k$ on each of which there is a
continuous choice  of minimal geodesic $\sigma(x_0,x_1)$ from $x_0$ to $x_1$. We prove that the geodesic complexity of an $n$-dimensional Klein bottle equals $2n$. The topological complexity of $K_n$ remains unknown for $n>2$.
 \end{abstract}

\section{Introduction}


The motion planning problem is of central importance in the field of robotics. The geodesic complexity $\GC(X)$ of a metric space $(X,\newd)$ is a measure of the minimal instability of any optimal motion planner on $X$ (optimal in the sense that the motions are always along shortest paths). It was introduced recently by the second author in \cite{RM19}, inspired by Farber's topological complexity $\TC(X)$ \cite{F03}, which is a measure of the minimal instability over all motion planners on $X$, not necessarily along shortest paths. In fact, $\TC(X)$ is defined for all topological spaces and is a homotopy invariant, while $\GC(X)$ depends on the metric \cite{RM19}.

Let $PX\to X\times X$ denote the free path fibration, which maps each path $\gamma$ in $X$ to the pair $(\gamma(0),\gamma(1))$. Furthermore, let $GX \subset PX$ consist of the minimal geodesics. This is paths whose length equals the distance between the endpoints: $\ell(\gamma)=d(\gamma(0),\gamma(1))$. The restriction of the free path fibration to $GX$ defines a map $\pi:GX\to X\times X$, which is not a fibration in general.

\begin{defin}[\cite{F03}]
The {\em topological complexity} of a space $X$, $\TC(X)$, is defined to be the smallest $k$ for which there exists a decomposition into  $k+1$ disjoint ENRs $X \times X = \bigsqcup_{i=0}^k E_i$ such that there are local sections $s_i \colon E_i \to PX$ of $PX\to X\times X$.
\end{defin}

\begin{defin}[\cite{RM19}]\label{gcdef}
The {\em geodesic complexity} of a metric space $(X, \newd)$, $\GC(X, \newd)$, is defined to be the smallest $k$ for which there exists a decomposition into $k+1$ disjoint ENRs $X \times X = \bigsqcup_{i=0}^k E_i$ such that there are local sections $s_i \colon E_i \to GX$ of $\pi$.
\end{defin}

It was shown in \cite{RM19} that these numbers are different in general, even though they agree in many cases.

The higher Klein bottles $K_n$ were introduced by the first author in the course of his work on planar polygon spaces \cite{D}:

\[K_n = (S^1)^n / ( z_1 , \ldots , z_{n-1} , z_n ) \sim ( \bar z_1 , \ldots , \bar z_{n-1} , -z_n ) .\]

These spaces are a generalization of the standard Klein bottle, which corresponds to $K_2$. We consider the metric space $(K_n,\newd)$ with the flat metric coming from the universal covering $\mathbb{R}^n$.

The topological complexity of the Klein bottle $K_2$ was an open problem for over a decade, even after the topological complexity had been determined for all other orientable and nonorientable surfaces. In 2017 $\TC(K_2)$ was finally computed by Cohen and Vandembroucq \cite{CV} and later by Iwase, Sakai and Tsutaya \cite{IST}. The geodesic complexity $\GC(K_2)$ (with the flat metric) is equal to the topological complexity $\TC(K_2)$, as was shown by the second author in \cite{RM19}.

The topological complexity of $K_n$ is currently unknown for $n\ge3$, and a proof seems to be out of reach at this point. The geodesic complexity is computed in this article:

\begin{thm}\label{mainthm}
The geodesic complexity of the flat higher Klein bottles $K_n$ is given by \[\GC(K_n)=2n.\]
\end{thm}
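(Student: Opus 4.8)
\medskip

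\noindent\emph{Strategy.} The plan is to prove the two inequalities $\GC(K_n)\le 2n$ and $\GC(K_n)\ge 2n$ separately, using the standard fact that $\GC(X)$ is governed by the structure of the ``cut locus'' data: for a fixed $x_0$, the set of points $x_1$ that are joined to $x_0$ by more than one minimal geodesic. Since $K_n=\R^n/G$ for the group $G$ generated by the isometries $(z_1,\dots,z_n)\mapsto(\bar z_1,\dots,\bar z_{n-1},-z_n)$ together with the integer translation lattice, distances and minimal geodesics in $K_n$ are completely explicit: a minimal geodesic from $x_0$ to $x_1$ corresponds to a shortest vector among $\{\,\tilde x_1 - g\tilde x_0 : g\in G\,\}$ in $\R^n$, for chosen lifts $\tilde x_0,\tilde x_1$. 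The first step is therefore to set up this covering-space dictionary carefully and to identify, for each pair $(x_0,x_1)$, the (finite) set of group elements realizing the minimum; the total space $X\times X$ is then stratified by the combinatorial type (which subset of $G$ is minimizing), and on the open stratum where the minimizer is unique we get a global continuous geodesic section.

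\medskip

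\noindent\emph{Upper bound $\GC(K_n)\le 2n$.} I would build an explicit decomposition of $K_n\times K_n$ into $2n+1$ ENRs, each carrying a continuous minimal-geodesic section. The geometry of the flat Klein bottle forces the ``ambiguity locus'' to be a union of pieces indexed by the coordinates that are ``exactly halfway'' (in the first $n-1$ circle factors, the reflection coordinates) together with the last coordinate where the $-1$ twist lives; one expects the ambiguity in the $i$-th reflected coordinate to contribute a codimension-one wall, and the twisted last coordinate to behave like the classical circle (contributing its own wall). A natural approach is to first handle the product-of-circles part $(S^1)^{n-1}$ factorwise --- each circle contributes a partition of $S^1\times S^1$ into the ``near'' diagonal part and the ``antipodal'' part, needing $2$ pieces --- and then deal with the twisted coordinate, and finally take a suitably refined common refinement, pruning it down to $2n+1$ pieces by absorbing lower-dimensional strata into higher ones (the usual ENR-surgery trick: a stratum of dimension $d$ can be pushed into the union of the open strata meeting its closure, one dimension at a time). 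Counting shows each ``halved'' coordinate costs one unit and one gets at most $2n$ as the top index; the main bookkeeping is to check the sections glue continuously across the strata one absorbs, i.e.\ that near a wall the chosen minimizing geodesic varies continuously as we approach from the designated side.

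\medskip

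\noindent\emph{Lower bound $\GC(K_n)\ge 2n$.} Here the plan is the cohomological/geometric lower bound argument from \cite{RM19}: if $X\times X = \bigsqcup_{i=0}^k E_i$ with continuous geodesic sections, then no $E_i$ can contain a ``totally geodesically ambiguous'' configuration of too high a dimension. Concretely, one exhibits a subspace $B\subset K_n\times K_n$ (a torus or a configuration built from the walls above) on which every point has multiple minimal geodesics, arranged so that any set admitting a continuous geodesic section can meet $B$ only in a piece of controlled dimension/topology; comparing with $\dim B$ and the number of ``independent directions of ambiguity'' (one per halved coordinate, $2n$ of them after accounting for both the $x_0$ and $x_1$ sides appropriately, or using the relation $\GC\ge \TC\ge$ a zero-divisor cup-length bound when that is available) forces $k\ge 2n$. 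I would look for $B$ of the form $\{(x_0,x_1): x_1 = $ a fixed ``antipode'' of $x_0$ in each sensitive coordinate$\}$ so that the minimal geodesics split into $2^{n}$ choices along independent axes, then show a continuous selection on any ENR piece can only resolve a bounded amount of this branching, quantified by a Lusternik--Schnirelmann-type or fibrewise-category estimate.

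\medskip

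\noindent\emph{Main obstacle.} I expect the hard part to be the lower bound: cohomological zero-divisor cup-length arguments that work cleanly for $K_2$ are not known to give $2n$ for $n\ge 3$ (indeed $\TC(K_n)$ is open precisely because those bounds are insufficient), so the $\GC\ge 2n$ direction must exploit genuinely \emph{metric} information --- the precise geometry of the cut locus and the branching of minimal geodesics --- rather than homotopy invariants. Pinning down exactly which subspace $B$ of the ambiguity locus forces the bound, and proving rigorously that a continuous geodesic section over an ENR cannot "unbranch" more than one halved coordinate at a time, is where the real work lies; the upper bound, by contrast, should be a (lengthy but routine) explicit construction plus ENR surgery.
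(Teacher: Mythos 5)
Your proposal is a strategy outline rather than a proof, and both halves have genuine gaps. For the upper bound, the factorwise treatment of the circle coordinates fails because $K_n$ is not a metric product: the identification $(x_1,\dots,x_n)\sim(1-x_1,\dots,1-x_{n-1},x_n+1)$ couples every reflected coordinate to the last one, so the set of points $K$-close to a lift $P=(a_1,\dots,a_n)$ is a polytope bounded not only by coordinate walls but by slanted hyperplanes $x_n=a_n\pm\bigl(\tfrac12+\sum_i(2a_i-\delta_i)(x_i-\tfrac12\delta_i)\bigr)$, and its combinatorial type \emph{changes} as $P$ moves (for $n\ge6$, ``middle'' and ``truncating'' vertices appear or disappear across quadric hypersurfaces in the $a_i$'s). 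The paper's count of $2n+1$ does not come from one wall per coordinate; it comes from a double stratification --- the base is partitioned into analytic pieces $M_\a$ by the combinatorial type of the polytope, the fiber over $P\in M_\a$ is partitioned by the $j$-faces of the polytope, and the pieces with equal total index $\dim(M_\a)+j\in\{0,\dots,2n\}$ are shown to be topologically disjoint and hence can be unioned into a single ENR. Your ``ENR-surgery'' move of absorbing a lower stratum into adjacent open strata is exactly what one \emph{cannot} do here: a geodesic section does not extend continuously across a cut-locus wall, because the limits of the sections from the two sides are different geodesics.

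For the lower bound you correctly observe that cohomological (zero-divisor) bounds cannot reach $2n$ --- that is why $\TC(K_n)$ is open --- but you offer no concrete substitute; ``a Lusternik--Schnirelmann-type estimate on how much branching a section can resolve'' is precisely the missing theorem. The paper's argument is elementary but quite specific and proceeds in three steps: (i) fixing $P$ and a vertex $V$ of its polytope, an induction on the dimension of the walls of the cut locus of $P$ near $p(V)$ shows that every point in the interior of an $(n-k)$-wall lies in the closure of at least $k+1$ of the $E_i$, so the vertex forces $n+1$ sets; (ii) $n-1$ further sets are extracted by letting $a_{n-1},\dots,a_1$ tend to $\tfrac12$ one at a time and observing that any continuous choice of geodesic near the limiting vertex is forced to point in the negative $i$-th direction when $a_i<\tfrac12$ and the positive direction when $a_i>\tfrac12$, which is incompatible with a single $E_i$ accumulating from both sides; (iii) the final $+1$ uses that ENRs are locally closed together with connectedness of the circle of pairs $\bigl(p(\tfrac12,\dots,\tfrac12,a_n),p(1,\dots,1,a_n+\tfrac12)\bigr)$, over which no globally consistent left/right choice exists as $a_n$ winds around. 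None of these mechanisms appears in your sketch, so the inequality $\GC(K_n)\ge 2n$ remains unproved in your proposal.
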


Most of the work in the proof consists in understanding the \textit{total cut locus} of $K_n$. Briefly, the total cut locus of a metric space $X$ consists of all pairs of points $(x,y)$ in $X\times X$ such that there are at least two shortest paths between $x$ and $y$. In other words, it is the union over all $x$ in $X$ of the cut loci of each $x$, in the usual sense.

It is conceivable that in fact $\TC(K_n)=\GC(K_n)$ for all $n$  could be shown by a general argument, in which case this work would yield the values $\TC(K_n)$ as an application of the previous theorem.
\section{Upper bound}
In this section, we prove $\GC(K_n)\le 2n$ by demonstrating explicit geodesic motion planning rules, postponing many details to later sections.

Let $\sim$ be the equivalence relation on $\R^n$ generated by $x\sim x+e_i$, $1\le i\le n-1$, where $e_i$ is the unit vector in the $i$th coordinate, and
\begin{equation}\label{rel}(x_1,\ldots,x_{n-1},x_n)\sim(1-x_1,\ldots,1-x_{n-1},x_n+1).\end{equation}
The higher Klein bottle $K_n$ is the quotient space, with $p:\R^n\to K_n$ the quotient map. The metric $d_K$ on $K_n$ is defined by
$$d_K(y,y')=\min\{d(x,x'):x\in p^{-1}(y), x'\in p^{-1}(y')\},$$
where $d$ is the Euclidean metric on $\R^n$.
If $P,Q\in\R^n$, we say that $Q$ is {\it $K$-close} to $P$ if $d_K(p(P),p(Q))=d(P,Q)$. If $\sigma_{P,Q}$ denotes the uniform linear path from $P$ to $Q$, then
the geodesics in $K_n$  are paths of the form $p\circ\sigma_{P,Q}$ such that $Q$ is $K$-close to $P$.

For $P=(x_1,\ldots,x_n)\in I^n$, let $C(P)$ consist of the following points of
 $p^{-1}(p(P))-\{P\}$ which are, in a certain sense, closest to $P$. The set $C(P)$ contains the points $P\pm e_1,\ldots,P\pm e_{n-1}$ and also the points $P\pm2e_n$ and $(y_1,\ldots,y_{n-1},x_n\pm1)$, where
\begin{equation}\label{CP}y_i\in\begin{cases}\{-x_i,1-x_i\}&0<x_i<\frac12\\
\{x_i\}&x_i\in\{0,\frac12,1\}\\
\{1-x_i,2-x_i\}&\frac12<x_i<1.\end{cases}\end{equation}
Let $\cR(P)$ denote the intersection of the closed half-spaces containing $P$ bounded by the perpendicular bisectors of the segments from $P$ to each of the points of $C(P)$. Note that the polytope $\cR(P)$ consists of those points $Q$ which are $K$-close to $P$, and that
$(p\times p)(\{(P,Q):P\in[0,1)^n, Q\in\partial(\cR(P))\}$ is the total cut locus of $K_n$.

 For $0\le j\le n$, let $R_j(P)$ denote the set of interiors of $j$-dimensional faces of $\cR(P)$. In particular, the set $R_n(P)$ consists of the single set $\interior(\cR(P))$. The equivalence relation $\sim$ induces one on each set $R_j(P)$. We consider subsets $\cD:=D_1\times\cdots\times D_n\subset\R^n$, where
\begin{equation}\label{D}D_i=\begin{cases}(0,\frac12)\cup(\frac12,1)\text{ or }\{0,\frac12\}&1\le i\le n-1\\
(0,1)\text{ or }\{0\}&i=n.\end{cases}\end{equation}
Note that for each $\cD$, $p:\cD\to p(\cD)$ is a homeomorphism.

Most of our work goes into proving the following theorem.
\begin{thm}\label{thm1} For $n\ge2$, the sets $\cD$ listed above can be partitioned into finitely many subsets $M_\a$, which are analytic spaces of varying dimensions, on which the polytopes $\cR(P)$ vary continuously and bijectively, preserving $\sim$. That is, if we choose a point $P_0\in M_\a$, there exist polytope-equivalences $\theta_P:\cR(P_0)\to \cR(P)$ preserving $\sim$  for all $P\in M_\a$, which vary continuously with $P$ (i.e., if $x\in\cR(P_0)$, then $P\mapsto \theta_P(x)$ is continuous). Also, $\theta_{P_0}=1$.

Moreover, (a) if $\dim(M_\a)\le\dim(M_{\a'})$ and $M_\a\ne M_{\a'}$, then the closure of $M_\a$ is disjoint from $M_{\a'}$ or any set equivalent to it, and (b) if $F$ is a $j$-face in $\cR(P_0)$ for $P_0\in M_\a$ and $\la P_i\ra\to P$ for $P_i\in M_\a$ and $P\in I^n$, then $\lim\theta_{P_i}(F)$ is contained in a $j'$-face of $\cR(P)$ for $j'\le j$.
\end{thm}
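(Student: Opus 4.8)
The plan is to reduce to connected cells on which the hyperplanes cutting out $\cR(P)$ depend analytically on $P$, stratify by the labelled combinatorial type of $\cR(P)$, and then build the equivalences $\theta_P$ by a canonical barycentric construction. First I would note that each point of $C(P)$ has the form $g(P)$ for a specific isometry $g$ in the group $\G$ generated by $\sim$, and that on each $\cD$ --- indeed on each of the two subintervals comprising a factor $(0,\frac12)\cup(\frac12,1)$ --- the word $g$ attached to a given member of $C(P)$ is constant. Hence on such a connected cell the perpendicular bisector $H_Q(P)$ of $[P,Q]$, $Q\in C(P)$, is the zero set of an affine function of $z$ with normal $Q-P$ affine in $P$ and constant quadratic in $P$, so $\cR(P)$ is the intersection of a \emph{fixed} finite family of closed half-spaces varying analytically with $P$; moreover $\cR(P)$ always contains a neighborhood of $P$, so it is a genuine $n$-polytope.

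To stratify, I would use that an intersection of a fixed finite family of half-spaces has only finitely many possible labelled face lattices, and that the locus inside a cell realizing a prescribed one --- a prescribed set of $n$-fold bisector intersections being actual vertices lying in $\cR(P)$, with prescribed incidences --- is semi-analytic, cut out by analytic equalities and inequalities among the vertex coordinates (ratios of minors, by Cramer's rule) and the half-space functions. Partitioning each such set into connected analytic submanifolds, and refining further as the degeneration analysis of the following sections requires, yields the finite family $\{M_\a\}$, on each of which the labelled combinatorial type of $\cR(P)$ is constant --- in particular the label on each facet recording which member of $C(P)$, hence which $g\in\G$, produces it, so that the $\G$-pairing of the boundary facets of $\cR(P)$ is also constant.

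On a fixed $M_\a$ with basepoint $P_0$ I would construct $\theta_P$ as follows. For a face $F$ of $\cR(P)$ let $b_F(P)$ be the barycenter of its vertices; each vertex solves an $n\times n$ linear system polynomial in $P$ and invertible on $M_\a$, so $b_F(P)$ is analytic on $M_\a$ and $b_{g(F)}(P)=g(b_F(P))$ for the pairing isometries $g$. Realize the barycentric subdivision of $\cR(P)$ by sending each chain of faces $F_0\subsetneq\cdots\subsetneq F_k$ (possibly with $F_k=\cR(P)$) to the simplex on $b_{F_0}(P),\dots,b_{F_k}(P)$; all these simplices are nondegenerate because $\cR(P)$ is an honest polytope. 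Let $\theta_P\colon\cR(P_0)\to\cR(P)$ be the simplicial map realizing the identity on this subdivision, $b_F(P_0)\mapsto b_F(P)$, affine on each simplex. Then $\theta_P$ is a homeomorphism taking $j$-faces onto $j$-faces, $\theta_{P_0}=1$, and $P\mapsto\theta_P(x)$ is continuous, since on each simplex of $\cR(P_0)$ it is the affine map determined by the continuously varying nondegenerate images of its vertices; and because the barycentric subdivision and the barycenters $b_F$ are $\G$-equivariant, $\theta_P$ preserves $\sim$ on each $R_j(P)$.

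Finally, via the homeomorphism $p$ the $\cD$'s partition $K_n$ and the $M_\a$ refine this to a stratification of $K_n$ which, being built from finitely many semi-analytic conditions, satisfies the frontier condition; then (a) is immediate, since a stratum $M_{\a'}\ne M_\a$ with $\dim M_{\a'}\ge\dim M_\a$ is neither $M_\a$ nor a stratum of dimension $<\dim M_\a$, hence misses both $\overline{M_\a}$ and every $\G$-translate of itself lying in it. For (b): with $S\subseteq C(P_0)$ the set (constant on $M_\a$) of members whose facets contain the $j$-face $F$, one has $\theta_{P_i}(F)=\cR(P_i)\cap\bigcap_{Q\in S}H_Q(P_i)$ with $j$-dimensional affine hull, and as $P_i\to P$ the half-space data and the bisectors $H_Q$, $Q\in S$, converge, so after passing to a subsequence $\lim\theta_{P_i}(F)$ lies in the face $\cR(P)\cap\bigcap_{Q\in S}H_Q(P)$ of $\cR(P)$. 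That this face has dimension $j'\le j$ --- i.e.\ that the bisectors in $S$ do not become too dependent in the limit, or that $\cR(P)$ stays thin in any new directions they acquire --- is precisely what the explicit description of the total cut locus and of how the faces of $\cR(P)$ collapse, carried out in the remaining sections, is arranged to guarantee; this convergence-and-collapse analysis (and the refinement it forces in the second step) is the main obstacle, everything else above being soft.
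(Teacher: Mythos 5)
Your overall architecture (stratify a cell by the labelled combinatorial type of $\cR(P)$, transport faces to faces by a map that is affine on pieces, then check a frontier condition for (a) and a no-rank-drop condition for (b)) matches the paper's, and your barycentric construction of $\theta_P$ is actually a cleaner, more explicit version of the paper's face-to-face correspondence (the paper essentially only specifies $\theta_P$ on vertices and appeals to the convex-hull description of faces). But the two places where you wave your hands are exactly the two places where the paper has to do real work, so as written the proposal has genuine gaps.

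For (a), you assert that the stratification, ``being built from finitely many semi-analytic conditions, satisfies the frontier condition.'' That is false as a general principle: a finite partition into semi-analytic (even semi-algebraic) sets need not satisfy the frontier condition, and your subsequent deduction of (a) is circular because it presupposes that $\overline{M_\a}\setminus M_\a$ meets only strata of strictly smaller dimension. The paper proves this directly using the special form of the stratifying functions: with $b_i=a_i-\frac14$, each $K(S)$ is an affine function of $(b_1^2,\ldots,b_{n-1}^2)$, so each $V(\a)=\bigcap_{\a(S)=0}\{K_S=0\}$ is cut out by a linear system in the $b_i^2$; if a limit point satisfies one more equation $K_{S_0}=0$ not implied by that system, row reduction shows the dimension strictly drops. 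Separate (but easy) arguments handle limits that leave $\cD$ for a different $\cD'$ or for a $\G$-equivalent copy (coordinates tending to $0$, $\tfrac12$, or $1$), using $\lim_{a\to 0}\Delta_a=0$; your proposal does not address the ``or any set equivalent to it'' clause at all. For (b), you correctly reduce to showing that the limiting face $\cR(P)\cap\bigcap_{Q\in S}H_Q(P)$ still has dimension $\le j$, but then explicitly defer this (``precisely what \dots the remaining sections \dots is arranged to guarantee''), i.e., you do not prove the one nontrivial claim. The paper closes this by observing that, after substituting the wall values $x_i=a_i-\frac12+\eps_i$ and subtracting one slant equation from the others, the coefficient matrix of the defining equalities becomes a $0$--$1$ matrix determined by the $\delta_i$'s alone, hence independent of $P$, so the rank of the system cannot decrease in the limit; it also checks via Theorem \ref{verthm} that middle and truncating vertices limit onto standard vertices. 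Without these two arguments (or substitutes for them), the proposal establishes only the first paragraph of Theorem \ref{thm1}, not parts (a) and (b).
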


By {\it analytic space} we mean an open subset of a real variety (the set of solutions of a set of polynomial (for us, quadratic) equations over $\R$). Our analytic spaces may possibly have singularities, but they have dimension, like a manifold.
The following corollary is half of Theorem \ref{mainthm}.
\begin{cor} \label{cor}$\GC(K_n)\le 2n$.\end{cor}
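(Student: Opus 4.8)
The plan is to deduce Corollary \ref{cor} from Theorem \ref{thm1} by exhibiting an explicit decomposition of $K_n\times K_n$ into $2n+1$ ENRs, each carrying a continuous choice of minimal geodesic. First I would reduce the problem to one living over the sets $\cD$: the flat square $[0,1)^n$ with its identifications maps onto $K_n$, and the sets $\cD$ from \eqref{D}, being $2^{n-1}\cdot 1 + (\text{finitely many lower-strata products})$, stratify $[0,1)^n$ into pieces on each of which $p$ is a homeomorphism onto its image. On each such $\cD$ a minimal geodesic from $p(P)$ to a point $y'$ is $p\circ\sigma_{P,Q}$ for a unique $K$-close representative $Q\in\cR(P)$, so a continuous geodesic motion planner over a subset $E\subset p(\cD)\times K_n$ amounts to a continuous choice, for $(P,y')$ with $P\in\cD$, of a $K$-close lift $Q=Q(P,y')\in\cR(P)$ of $y'$; equivalently, a continuous section of the "fiberwise-boundary-resolved" bundle whose fiber over $P$ is $\cR(P)$ with its $\sim$-identifications on $\partial\cR(P)$.

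Next I would build the domains $E_0,\dots,E_{2n}$ using the face-dimension filtration. Over a fixed $M_\a$, Theorem \ref{thm1} gives trivializing homeomorphisms $\theta_P:\cR(P_0)\to\cR(P)$ preserving $\sim$ and varying continuously, so the bundle of polytopes-with-identifications over $M_\a$ is trivial; hence a continuous section over $M_\a$ is the same as a single point of the fiber $\cR(P_0)/\!\sim$, and the fiber decomposes as $\bigsqcup_{j=0}^n R_j(P_0)/\!\sim$. Over the open cell $\interior\cR(P_0)$ and over the $\sim$-classes of $j$-faces, one gets continuous sections by choosing, for $y'$ in a given stratum, the unique (or a consistently chosen) lift. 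The index shift to $2n$ comes from the standard topological-complexity-style argument: stratify $\cD$ itself by $\dim M_\a$ into at most $n+1$ strata $\cD^{(0)},\dots,\cD^{(n)}$ (the nice part being $\dim=n$, i.e. the product of $(0,\tfrac12)\cup(\tfrac12,1)$-type intervals), and on the polytope side stratify by face dimension $j=0,\dots,n$. A pair $(i,j)$ contributes to $E_{i+j}$ after regrouping; property (a) of Theorem \ref{thm1} guarantees the $M_\a$-strata of a given dimension are "separated" so their union is an ENR and carries a section assembled from the per-$M_\a$ sections, and property (b) guarantees that as one approaches the boundary of an $M_\a$-stratum inside a lower one, a $j$-face limits into a $j'$-face with $j'\le j$, so the grouping by $i+j$ is consistent and the pieces fit together into ENRs. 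This yields $\le n+n = 2n$ as the top index, i.e. $2n+1$ sets $E_0,\dots,E_{2n}$.

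I would then check the ENR condition and the continuity of the resulting local sections. Each $M_\a$ is an analytic space, the polytopes $\cR(P)$ and their faces are cut out by the quadratic perpendicular-bisector inequalities from \eqref{CP}, and the maps $\theta_P$ are (piecewise) semialgebraic; so each $E_k$, being a finite union over $\a$ and over $j$-faces of semialgebraic pieces, is a finite union of ENRs and hence an ENR, and the $E_k$ are pairwise disjoint and cover $K_n\times K_n$ once we push forward along $p\times p$ (which is a local homeomorphism on each $\cD\times K_n$). Continuity of $s_k$ on each piece is immediate from Theorem \ref{thm1}; the only subtlety is that the closure of a piece over $M_\a$ may meet a piece over a lower-dimensional $M_{\a'}$, but property (a) says it does not (the closure of the lower stratum is disjoint from the higher one, so within a single $E_k$ the pieces are topologically separated), so no gluing of sections across strata is needed.

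The main obstacle I expect is the combinatorial bookkeeping that forces the bound to be exactly $2n$ rather than something larger: one must verify that the regrouping $(i,j)\mapsto i+j$ actually works — i.e. that whenever a stratum over $M_\a$ with face dimension $j$ has in its closure a stratum over $M_{\a'}$ (with $\dim M_{\a'}<\dim M_\a$, so a larger value of $i$) the face dimension there is strictly smaller, exactly compensating the jump in $i$, so that everything assigned to $E_k$ is genuinely a disjoint union of ENRs with a well-defined continuous section. This is precisely what properties (a) and (b) of Theorem \ref{thm1} are engineered to deliver, so the corollary is a bookkeeping consequence of the theorem, but making the index arithmetic airtight (and checking that no $E_k$ with $k>2n$ is needed, in particular that the $n=2$ base case recovers the known $\TC(K_2)=\GC(K_2)$) is where the care lies.
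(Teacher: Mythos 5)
Your proposal follows essentially the same route as the paper: trivialize the polytope bundle over each $M_\a$ via $\theta_P$, choose one representative per equivalence class of $j$-face interiors, and group the resulting pieces $\cS_{\a,j}$ by $\dim M_\a + j$ into $2n+1$ sets, with properties (a) and (b) of Theorem \ref{thm1} supplying the topological disjointness within each group. One minor slip in your last paragraph: when a sequence in a piece over $M_\a$ limits into a piece over a lower-dimensional $M_{\a'}$, property (b) gives $j'\le j$, so the sum $\dim M_{\a'}+j'$ strictly \emph{decreases} rather than being ``exactly compensated,'' and that strict decrease is exactly why pieces assigned to the same index never accumulate on one another.
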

\begin{proof} Fix $M_\a$ and $P_0\in M_\a$. Choose a representative of each equivalence class of $R_j(P_0)$, and let $R'_{j}(P_0)$ denote their union. Let $R'_{j}(P)=\theta_P(R'_{j}(P_0))$, and
$$\cS_{\a,j}=\{(P,Q):P\in M_\a,\ Q\in R'_{j}(P)\}.$$
The bijective function $(p\times p)|\cS_{\a,j}$ has a continuous inverse. [\![It suffices to show that there does not exist a sequence $(P_i,Q_i)\in\cS_{\a,j}$ converging to a point $(P,Q)$ not in $\cS_{\a,j}$ which is equivalent ($\sim$) to a point of $\cS_{\a,j}$. Since $M_\a\subset\cD$ for some $\cD$ as above, we must have $P\in M_\a$. Using the functions $\theta_{P_i}$ and $\theta_P$, we may assume that the convergence $Q_i\to Q$ is taking place in $\cR(P_0)$, where disjointness of interiors of $j$-faces, together with the fact that equivalence of faces preserves dimensions, implies that the contemplated situation cannot occur.]\!]
The function $\cS_{\a,j}\to (\R^n)^I$ defined by $(P,Q)\mapsto \sigma_{P,Q}$ is continuous, hence so is the composite
$$(p\times p)(\cS_{\a,j})\mapright{(p\times p)^{-1}}\cS_{\a,j}\to (\R^n)^I\to (K_n)^I,$$
giving a continuous choice on $(p\times p)(\cS_{\a,j})$ of geodesics in $K_n$. This is called a geodesic motion planning rule on $(p\times p)(\cS_{\a,j})$.
Using all $\a$ and all $j$, the sets $(p\times p)(\cS_{\a,j})$ give a partition of $K_n\times K_n$ into subsets on which continuous choice of geodesics can be found.
However some of the $\cS_{\a,j}$'s can be combined:

We claim that if $\dim(M_\a)+j=\dim(M_{\a'})+j'$, then $(p\times p)(\cS_{\a,j})$ and $(p\times p)(\cS_{\a',j'})$ are topologically disjoint (or equal). This implies that, for $0\le i\le 2n$, we can use as our sets  the union of all $(p\times p)(\cS_{\a,j})$ for which $\dim(M_\a)+j=i$, establishing the corollary, since we have $2n+1$ subsets admitting continuous choices of geodesics.

To prove the claim, suppose $(p(P_i),p(Q_i))$ converges to $(p(P),p(Q))$, where $P_i\in M_\a$, $Q_i\in \theta_{P_i}(R'_{j}(P_0))$, $P\in M_{\a'}$, and $Q\in \theta_P(R'_{j'}(P_0))$.
Then a subsequence, which we also call $\la P_i\ra$, converges to a point equivalent to $P$. Then (a) implies that either $M_{\a'}=M_\a$ or $\dim(M_{\a'})<\dim(M_\a)$. In the first case, then $j'=j$, and so $\cS_{\a,j}=\cS_{\a',j'}$. In the second case, $j'>j$. Since $R_j(P_0)$ is finite, we may assume that there is a $j$-face $F$ of $\cR(P_0)$ such that $Q_i\in\theta_{P_i}(F)$ for all $i$, and since the number of equivalence classes under $p$ is finite, then we may assume a sequence of $Q_i$'s converges, so by (b), $j'\le j$, a contradiction.

 By separating $x_n\in\{0,1\}$ from $x_n\in(0,1)$, we remove any concern about continuity when $x_n=0$, and similarly for $x_i\in\{0,\frac12,1\}$.
\end{proof}

The sets $M_\a$ are quite simple for $n\le 6$, as described in the following theorem, which is a consequence of Theorem \ref{verthm}. See especially material immediately following Figure \ref{fig4}. Here we initiate the practice, continued throughout, of using $(a_1,\ldots,a_n)$ for the coordinates of $P$, the point away from which we will be moving, while $(x_1,\ldots,x_n)$ will be used for coordinates in $\cR(P)$, the points toward which we move from $P$.

\begin{thm} \label{thm2} If $n\le 4$, Theorem \ref{thm1} holds using as $M_\a$ exactly the sets $\cD$ described prior to the theorem. For $n=5$, it holds with the one change that
$\cD=((0,\frac12)\cup(\frac12,1))^{4}\times D_5$ be replaced by $\cE=\{\frac14,\frac34\}^4\times D_5$ and $\cD-\cE$. If $n=6$, $\cD=((0,\frac12)\cup(\frac12,1))^5\times D_6$ must be replaced by
$$\cE=\{(a_1,\ldots,a_{5}):\dsum_{i=1}^{5}\min((a_i-\tfrac14)^2,(a_i-\tfrac34)^2)=\tfrac{1}{16}\}\times D_6$$ and $\cD-\cE$, and also  $\cD=((0,\frac12)\cup(\frac12,1))^4\times \{0,\frac12\}\times D_6$ must be replaced by $\cE=\{\frac14,\frac34\}^4\times\{0,\frac12\}\times D_6$ and $\cD-\cE$, and similarly for permutations of the first five factors.\end{thm}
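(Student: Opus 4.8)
The plan is to carry out the construction promised by Theorem \ref{thm1} explicitly in low dimensions, tracking how the combinatorial type of the polytope $\cR(P)$ changes as $P$ varies over a cell $\cD$, and checking that in dimensions $n \le 4$ this type is constant on each $\cD$, while for $n = 5,6$ it jumps precisely along the quadric $\cE$. First I would set up coordinates: for $P=(a_1,\dots,a_n)$ with each $a_i\in(0,\tfrac12)\cup(\tfrac12,1)$, the defining halfspaces of $\cR(P)$ come from the points of $C(P)$ listed in \eqref{CP}. The halfspaces from $P\pm e_i$ ($i<n$) and $P\pm 2e_n$ are fixed (they are $|x_i-a_i|\le\tfrac12$ and $|x_n-a_n|\le1$), so the only ``moving'' facets are the $2^n$-fold family of bisectors coming from the points $(y_1,\dots,y_{n-1},a_n\pm1)$, where each $y_i$ is the near or far reflection of $a_i$. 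The key computation is: the bisector of the segment from $P$ to $(y_1,\dots,y_{n-1},a_n\pm1)$ is a hyperplane whose signed distance from $P$ is governed by $\tfrac14+\sum_{i\in S}((a_i-\tfrac14)^2\text{ or }(a_i-\tfrac34)^2)$-type expressions, where $S$ indexes which coordinates took the ``far'' reflection. I would compute these bisectors, observe that exactly one of the $2^{n-1}$ choices (for fixed sign of the last coordinate) is ``innermost'' unless there is a tie, and show the innermost one is the only one contributing a facet in a neighborhood of $P$ — provided no tie occurs.

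The second step is the tie analysis. Two of the candidate hyperplanes coincide in relevance exactly when the corresponding quadratic expressions are equal, i.e.\ when $\min((a_i-\tfrac14)^2,(a_i-\tfrac34)^2)$ summed over the coordinates that differ hits a critical value; a short calculation shows the first genuinely new coincidence (one that actually affects which facet is innermost, hence the combinatorial type of $\cR(P)$) occurs when $\sum_{i=1}^{n-1}\min((a_i-\tfrac14)^2,(a_i-\tfrac34)^2)=\tfrac1{16}$, and that for $n-1\le 3$ this locus is empty inside $((0,\tfrac12)\cup(\tfrac12,1))^{n-1}$ because each term is strictly less than $\tfrac1{16}$ there and the sum of at most three such terms cannot reach $\tfrac1{16}$ unless forced — wait, that needs care: each $\min((a_i-\tfrac14)^2,(a_i-\tfrac34)^2)<\tfrac1{16}$, so the sum of $k$ of them is $<\tfrac k{16}$, hence for $k\le 3$ one cannot conclude emptiness from size alone; instead I would argue that the relevant coincidence for the \emph{innermost} facet requires the far-reflection contributions to balance the fixed $\tfrac12$-facets from $P\pm e_i$, and trace through that for $k\le 3$ the innermost moving facet is always strictly interior to (hence renders redundant) or strictly exterior to the fixed facets in a way that does not change type, whereas at $k=4$ the expression $4\cdot\tfrac1{16}=\tfrac14$ becomes commensurable with the $e_n$-direction width, producing the degenerate locus $\{\tfrac14,\tfrac34\}^4$ for $n=5$ and the quadric for $n=6$. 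This is the step I expect to be the main obstacle: pinning down exactly which coincidences among the bisectors change the face lattice of $\cR(P)$ (as opposed to coincidences that merely occur among redundant halfspaces and are invisible to the polytope), and doing so uniformly enough to see the pattern stop being trivial precisely at $n=5$.

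The third step is to verify that the resulting stratification satisfies the hypotheses of Theorem \ref{thm1}. On each stratum $M_\a$ (either a full $\cD$, or $\cD-\cE$, or $\cE$) the face lattice of $\cR(P)$ is constant by construction, and the vertices of $\cR(P)$ are solutions of fixed subsystems of the (quadratic in $a$, linear in $x$) facet equations; these vary continuously and, since the face lattice does not change, bijectively, so the polytope-equivalences $\theta_P$ exist and are continuous, with $\theta_{P_0}=1$. The equivalence relation $\sim$ on faces is respected because the reflection \eqref{rel} permutes the defining points of $C(P)$ in a manner independent of which stratum we are on. For condition (a), I would note that $\cE$ is a proper analytic subset (a quadric, resp.\ a finite set) of the top stratum $\cD$, so $\dim\cE<\dim(\cD-\cE)$ and $\overline{\cE}\cap(\cD-\cE)=\emptyset$ since $\cE$ is already closed in $\cD$; disjointness of strata of equal dimension and their closures is automatic as the $\cD$'s are pairwise disjoint with disjoint closures by construction of \eqref{D} (the separation of $x_i\in\{0,\tfrac12,1\}$ from the open intervals). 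For condition (b), semicontinuity of face dimension under the limit $\theta_{P_i}(F)\to\text{(something in }\cR(P))$ follows because as $P_i\to P$ the facet hyperplanes converge, so a $j$-face, cut out by $n-j$ of them, limits to a set lying in the intersection of (the limits of) those $n-j$ hyperplanes, whose dimension is $\le j$. For $n=6$ the permutation statement is immediate from the symmetry of the whole setup under permuting the first $n-1$ coordinates. Finally, I would record that this verification, specialized via Theorem \ref{verthm} (whose proof will supply the general-$n$ stratification and the semicontinuity), gives exactly the cells listed, completing the proof. \qed
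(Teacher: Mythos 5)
Your plan does not reconstruct the mechanism the paper actually uses, and the step you yourself label ``the main obstacle'' is exactly the missing content. The paper obtains Theorem \ref{thm2} by reading off Theorem \ref{verthm} (see the paragraph after Figure \ref{fig4}), where the combinatorial structure of $\cR(P)$ is governed by the signs of $K(S)=\frac12-\Delta(S)+\Delta(\St)$ for $S\subset[\![n-1]\!]$, with $\Delta_a=\frac18-2(a-\frac14)^2\in(0,\frac18]$: since each $\Delta_a\le\frac18$, one has $K(S)\ge0$ automatically when $|S|\le4$, while $K([\![n-1]\!])=\frac{5-n}{8}+2\sum_i\min\bigl((a_i-\frac14)^2,(a_i-\frac34)^2\bigr)$, so nothing degenerates for $n\le4$, the only degeneration for $n=5$ is the finite set $\{\frac14,\frac34\}^4$ (where $v_S^+=v_S^-$), and for $n=6$ the type changes exactly across the quadric $Z=\frac1{16}$, where standard vertices with $|S|=5$ are traded for middle and truncating vertices. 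Your proposed mechanism is different and incorrect: it is not true that only an ``innermost'' one of the $2^{n-1}$ slant bisectors contributes a facet, so that the face lattice changes when two of them ``tie''; for every $P$ all $2^{n-1}$ top (and bottom) bisectors carry facets (e.g.\ the four slanted faces of the pyramid in Figure \ref{expl2}), and what changes the face lattice is whether a vertex cut out by walls and slant hyperplanes crosses the hyperplane $x_n=a_n$ (middle vertices, $K(S)<0$) or $x_n=a_n\pm1$ (truncating vertices, $K(S)>1$). Relatedly, your threshold $\sum_i\min((a_i-\frac14)^2,(a_i-\frac34)^2)=\frac1{16}$ is only the $n=6$ locus: for $n=5$ the critical locus is $\sum=0$, not $\frac1{16}$; and your worry about emptiness for $n-1\le3$ is resolved not because the $\frac1{16}$-locus is empty (it is nonempty already for $n-1\ge2$) but because that locus is not where any $K(S)$ equals $0$ or $1$, hence is invisible to the polytope. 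Since you explicitly defer ``pinning down exactly which coincidences change the face lattice,'' the heart of the theorem is not proved.

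Two further points. First, you do not address the second $n=6$ modification, for the cells $((0,\frac12)\cup(\frac12,1))^4\times\{0,\frac12\}\times D_6$: the paper handles these via the product decomposition $\cR(P)=[a_5-\frac12,a_5+\frac12]\times\cR(a_1,\ldots,a_4,a_6)$ of Section \ref{pfsec}, which reduces them to the $n=5$ analysis and forces the extra locus $\{\frac14,\frac34\}^4\times\{0,\frac12\}\times D_6$; this is not the ``permutation symmetry'' remark, which only says the distinguished $\{0,\frac12\}$ factor may sit in any of the first five slots. Second, your step on $\sim$-preservation is too loose: the identifications among vertices change precisely when some $K(S)$ equals $0$ or $1$ (Proposition \ref{equivthm}), which is the reason $\cE$ must be split off; asserting that the reflection (\ref{rel}) acts ``independently of the stratum'' misses this. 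Your final verification step (continuity of $\theta_P$, conditions (a) and (b)) is reasonable and parallels Section \ref{pfsec}, but it presupposes the stratification whose justification is the missing step.
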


\section{Examples when $n=2$ or $3$}\label{sec2}

In this section, we illustrate the sets $\cR(P)$ when $n=2$ or 3, when we can actually picture them. For $0<a\le\frac12$, let
\begin{equation}\label{Del}\Delta_a=a-2a^2=\tfrac18-2(a-\tfrac14)^2.\end{equation}
This formula is very important throughout the paper.

We begin with the case $n=2$. Let $P=(a_1,a_2)$ with $0<a_1<\frac12$. For $\eps\in\{0,1\}$, the equation of the perpendicular bisector of the segment between $(a_1,a_2)$ and $(\eps-a_1,a_2+1)$ is $x_2=a_2+\frac12+(2a_1-\eps)(x_1-\frac12\eps)$.
It is easy to check that $\cR(a_1,a_2)$ is a hexagon as pictured in Figure \ref{expl}, with $V_\pm=(\frac12-a_1,a_2\pm(\frac12+\Delta_{a_1}))$ and $C_{\pm,\pm'}=(a_1\pm\frac12,a_2\pm'(\frac12-\Delta_{a_1}))$. We have $V_+\sim C_{--}\sim C_{+-}$, and $V_-\sim C_{-+}\sim C_{++}$. Also $V_+C_{-+}\sim C_{--}V_-$ and $V_+C_{++}\sim C_{+-}V_-$ and  $C_{-+}C_{--}\sim C_{++}C_{+-}$. For $\frac12<a_1<1$, the shape is the same with formulas modified slightly.
\begin{fig}\label{expl}
{\bf Polygon when $n=2$}
\begin{center}
\begin{\tz}[scale=.75]
\draw (0,0) -- (4,0) -- (4,4) -- (0,4) -- (0,0);
\draw [red] (1,3.5) -- (3,2.5) -- (3,-.5) -- (1,-1.5) -- (-1,-.5) -- (-1,2.5) -- (1,3.5);
\node at (-1,5) [circle,fill,inner sep=1pt]{};
\node at (1,1)  [circle,fill,inner sep=1pt]{};
\node at (3,5) [circle,fill,inner sep=1pt]{};
\node at (5,1)  [circle,fill,inner sep=1pt]{};
\node at (3,-3)  [circle,fill,inner sep=1pt]{};
\node at (-1,-3)  [circle,fill,inner sep=1pt]{};
\node at (-3,1)  [circle,fill,inner sep=1pt]{};
\node at (3,2.5) {$C_{++}$};
\node at (1,3.5) {$V_+$};
\node at (1,-1.5) {$V_-$};
\node at (-1,2.5) {$C_{-+}$};
\node at (-1,-.5) {$C_{--}$};
\node at (3,-.5) {$C_{+-}$};
\node at (1.3,1) {$P$};
\end{\tz}
\end{center}
\end{fig}

If $a_1\in\{0,\frac12,1\}$, then $\cR(a_1,a_2)$ is a unit square centered at $(a_1,a_2)$, with vertical sides equivalent, horizontal sides equivalent (in the opposite order), and all four vertices equivalent. The continuous  bijective variation of $\cR(P)$, preserving $\sim$, for $P=(a_1,a_2)$ in any one of the domains $(0,\frac12)\times(0,1)$, $(\frac12,1)\times(0,1)$ (hence in $((0,\frac12)\cup(\frac12,1))\times(0,1)$), $\{0,\frac12\}\times(0,1)$, $(0,1)\times\{0\}$, etc., is clear. As an example of a motion planning rule, we might choose, for $P_0\in M_\a=(0,\frac12)\times(0,1)$, the interiors of the segments $C_{-+}V_+$, $V_+C_{++}$, and $C_{++}C_{+-}$ as representatives of equivalence classes of $R_1(P_0)$, with $R'_{1}(P_0)$ being their union, and choose $R_0'(P_0)=\{V_+,C_{++}\}$. The motion planning rule for $\{(P,Q):P\in M_\a, Q\in \theta_P(R'_{j}(P_0))\}$ is $(p(P),p(Q))\mapsto (p\times p)(\sigma_{P,Q})$, $j\in\{0,1,2\}$.

Now we consider the case $n=3$.  Consider first the points $P=(a_1,a_2,a_3)$ with $0<a_1,a_2<\frac12$. The planes bisecting the lines from $P$ to $P\pm e_1$ and $P\pm e_2$ yield a region bounded by four vertical walls, rising above and below the square in the $x_1x_2$-plane with vertices at $(a_1\pm\frac12,a_2\pm' \frac12)$. These walls will be capped above and below by  pyramids with vertices $V_\pm=(\frac12-a_1,\frac12-a_2,a_3\pm(\frac12+\Delta_{a_1}+\Delta_{a_2}))$.
The top vertex $V_+$ is the intersection of four planes with equations, for $\eps_i\in\{0,1\}$, $$x_3=a_3+\tfrac12+\dsum_{i=1}^2(2a_i-\eps_i)(x_i-\tfrac12\eps_i).$$
Each of these planes comes down and intersects two of the walls. In Figure \ref{expl2}, we depict schematically part of one of these regions $\cR(P)$. The letters $F$ and $S$ denote a front and side wall.

\begin{fig}\label{expl2}
{\bf Polytope when $n=3$}
\begin{center}
\begin{\tz}[scale=.45]
\draw (0,0) -- (.6,1.2) -- (3,4) -- (3,6) -- (1.2,7.6) -- (0,10) -- (-.6,8.8) -- (-3,6) -- (-3,4) -- (-1.2,2.4) -- (0,0);
\draw (.6,1.2) -- (0,4) -- (-1.2,2.4);
\draw (-.6,8.8) -- (0,6) -- (1.2,7.6);
\draw (0,4) -- (0,6);
\node at (-1.2,2.4) [circle,fill,inner sep=1.2pt]{};
\node at (.6,1.2)  [circle,fill,inner sep=1.2pt]{};
\node at (-.6,8.8) [circle,fill,inner sep=1.2pt]{};
\node at (1.2,7.6)  [circle,fill,inner sep=1.2pt]{};
\node at (3.7,6) {$C_{+}$};
\node at (0,10.4) {$V_+$};
\node at (0,-.4) {$V_-$};
\node at (3.7,4) {$C_{-}$};
\node at (-1.5,5) {$F$};
\node at (1.5,5) {$S$};
\end{\tz}
\end{center}
\end{fig}

The points at the $C_\pm$ level are at height $a_3\pm(\frac12-\Delta_{a_1}-\Delta_{a_2})$. Since $\Delta_a\le\frac18$, the sides of the upper pyramid do not intersect those of the lower pyramid. For $n\ge6$, such an intersection complicates the analysis.
The points indicated by $\bullet$s have coordinates $(\frac12-a_1,a_2\pm\frac12,a_3\pm'(\frac12+\Delta_{a_1}-\Delta_{a_2}))$ or $(a_1\pm\frac12,\frac12-a_2,a_3\pm'(\frac12-\Delta_{a_1}+\Delta_{a_2}))$. Points on the walls are equivalent to points on the opposite walls. Points on the slanted quadrilaterals are equivalent to points on the vertically displaced quadrilateral, but in opposite directions in both the $x_1$- and $x_2$-directions. The continuous dependence of this region $\cR(P)$ on $P$ is clear from these formulas. If $a_1$ or $a_2$ (or both) is in $(\frac12,1)$, the description is similar, with $\frac12-a_i$ replaced by $\frac32-a_i$, and $\Delta_a=\frac18-2(a-\frac34)^2$. The change at $a_i=\frac12$ is not a problem for continuity since $\frac12$ has been removed from the domain.

If $a_1\in\{0,\frac12\}$, then $\cR(P)=[a_1-\frac12,a_1+\frac12]\times\cR(a_2,a_3)$, where $\cR(a_2,a_3)$ is the 2-dimensional region described in the discussion for $n=2$, and $(a_1-\frac12,x_2,x_3)\sim(a_1+\frac12,x_2,x_3)$. To see this, we observe that the planes bounding $\cR(P)$ are $\R\times\mathcal{P}$, where $\mathcal{P}$ bounds $\cR(a_2,a_3)$, and $\{a_1\pm\frac12\}\times\R^2$. After reminding the reader that the case when $a_3=0$ (or 1) is separated from the case when $a_3\in(0,1)$, but is similar in nature, we conclude that we have completed the claimed description of the required regions when $n=3$.

\section{Vertices of a polytope $\cR(P)$} \label{sec3}

In this section, we determine the vertices of the polytopes $\cR(P)$ when $0<a_i<\frac12$ for $1\le i\le n-1$ and $a_n=0$. The restriction to $a_n=0$ is just to simplify formulas. Just add $a_n$ to all $x_n$ values for the general case. We let $\cRt(P)$ denote an approximation of $\cR(P)$ which does not take into account truncating due to the points $P\pm2e_n$ of $C(P)$.

\begin{defin}\label{def}Let $P=(a_1,\ldots,a_{n-1},0)$ with $0<a_1,\ldots,a_{n-1}<\frac12$. Let $\cRt(P)$ denote the polytope in $\R^n$ which is the intersection of the following half-spaces:
\begin{eqnarray}&&x_i\le a_i+\tfrac12,\  i=1,\ldots,n-1\label{30}\\
&&x_i\ge a_i-\tfrac12,\ i=1,\ldots,n-1\label{31}\\
&&x_n\le\tfrac12+\ds\sum_{i=1}^{n-1}(2a_i-\delta_i)(x_i-\tfrac12\delta_i),\ \delta_i\in\{0,1\}\label{*}\\
&&x_n\ge-\tfrac12-\ds\sum_{i=1}^{n-1}(2a_i-\delta_i)(x_i-\tfrac12\delta_i),\ \delta_i\in\{0,1\}.\label{neg}
\end{eqnarray}\end{defin}
Intuitively, $\cRt(P)$ can be thought of as walls capped by a pyramid on top and another on the bottom. The vertices will occur where a $j$-face of a pyramid
intersects an $(n-j)$-dimensional wall.
However, when $n\ge 6$, some parts of the top pyramid might intersect a wall at a negative value of $x_n$. Such a vertex will be outside the lower pyramid; it will not satisfy (\ref{neg}). This vertex, and a corresponding vertex on the bottom pyramid, will be replaced by a number of ``middle'' vertices on the intersection, at $x_n=0$, of the lower and upper pyramids.

When $n\ge6$, some vertices of $\cRt(P)$ will  lie above the hyperplane $x_n=1$, which is the perpendicular bisector of the segment connecting $P$ to $P+2e_n$. The desired polytope $\cR(P)$ is the intersection of $\cRt(P)$ with the region $-1\le x_n\le 1$.

We define a {\it labeled set} $S$ to be a set $S$ together with a function $\eps_S:S\to\{0,1\}$, $i\mapsto\eps_i$. Also, $|S|$ is the cardinality of $S$, and $[\![n-1]\!]=\{1,\ldots,n-1\}$.
With $\Delta$ as in (\ref{Del}), let $\Delta(S)=\dsum_{i\in S}\Delta_{a_i}$, and let $\St=[\![n-1]\!]-S$.
The quantity \begin{equation}\label{Kdef}K(S)=\tfrac12-\Delta(S)+\Delta(\St)\end{equation} will play a central role in our results.
Note that, for fixed $S$, $K(S)$ is a quadratic function of $a_1,\ldots,a_{n-1}$. It depends only on the set $S$, not the labeling. It satisfies
\begin{equation}\label{KSp} K(S)+K(\St)=1,\end{equation} which will be useful later.

The vertices of $\cR(P)$ are described as follows.
\begin{thm}\label{verthm}
 For $P=(a_1,\ldots,a_{n-1},0)$ with $0<a_1,\ldots,a_{n-1}<\frac12$, $\cR(P)$ has vertices of possibly three types.
\begin{itemize}
\item[$\bullet$] For labeled sets $S\subset[\![n-1]\!]$ satisfying $0\le K(S)\le1$, there are ``standard'' vertices $v_S^\pm$ with
\begin{equation}\label{36}x_i=\begin{cases}a_i-\tfrac12+\eps_i&i\in S\\
\tfrac12-a_i&i\in\St\\
\pm K(S)&i=n.\end{cases}\end{equation}
     If $K(S)=0$, then $v_S^+=v_S^-$.
\item[$\bullet$] For labeled sets $S$ satisfying $K(S)<0< K(S-\{k\})$, there are ``middle'' vertices $v^0_{S,k}$ with
\begin{equation}\label{xi}x_i=\begin{cases}a_i-\tfrac12+\eps_i&i\in S-\{k\}\\
\tfrac12-a_i&i\in\St\\
a_k-\tfrac12+\eps_k-\tfrac{K(S)}{2a_k-\eps_k}&i=k\\
0&i=n.\end{cases}\end{equation}

\item[$\bullet$] For labeled sets $S\cup\{k\}$ satisfying $K(S\cup\{k\})< 1<K(S)$, there are ``truncating'' vertices $v^\pm_{S,k}$ satisfying
\begin{equation}\label{trun}x_i=\begin{cases} a_i-\tfrac12+\eps_i&i\in S\\
\tfrac12-a_i&i\in\St-\{k\}\\

a_k-\tfrac12+\eps_k+\tfrac{1-K(S\cup\{k\})}{2a_k-\eps_k}&i=k\\
\pm1&i=n.\end{cases}\end{equation}\end{itemize}
There are no other vertices of $\cR(P)$.
\end{thm}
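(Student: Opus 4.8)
The plan is to compute directly which half-spaces from Definition \ref{def} (together with the two truncating constraints $x_n\le1$, $x_n\ge-1$) are active at a vertex, and to organize the bookkeeping via labeled subsets $S\subseteq[\![n-1]\!]$. A point of $\cRt(P)$ is a vertex iff $n$ linearly independent bounding hyperplanes pass through it. First I would handle the ``generic'' vertices coming purely from $\cRt(P)$, ignoring truncation: at such a point each coordinate $i\le n-1$ is pinned either by a wall \eqref{30}/\eqref{31} (giving $x_i=a_i-\tfrac12+\eps_i$, $\eps_i\in\{0,1\}$) or is left free, in which case it is determined by an upper face \eqref{*} meeting a lower face \eqref{neg}. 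Let $S$ be the set of indices pinned by a wall, with labeling $\eps_S$ recording which wall. A short computation shows that for $i\in\St$ the upper and lower pyramid faces through the vertex force $x_i=\tfrac12-a_i$ (this is where the quantity $\Delta_{a_i}=a_i-2a_i^2$ enters: substituting $x_i=\tfrac12-a_i$ into $(2a_i)(x_i)$ for an unlabeled coordinate and $(a_i-\tfrac12+\eps_i)(2a_i-\eps_i)$ for a labeled one yields exactly the contributions $\Delta_{a_i}$ for $i\in\St$ and $-\Delta_{a_i}$ for $i\in S$ to $x_n$). This produces $x_n=\pm K(S)$ with $K(S)$ as in \eqref{Kdef}; feasibility against the opposite pyramid and against the walls reduces precisely to $0\le K(S)\le1$, and $K(S)=0$ collapses $v_S^+=v_S^-$. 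That establishes the first bullet.

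Next I would treat the two correction mechanisms separately. For a ``middle'' vertex the top pyramid would, following the recipe above for some $S$, want to place its vertex at $x_n=K(S)<0$, violating \eqref{neg}; so the vertex actually lies on the intersection slab $x_n=0$ of an upper face and a lower face. Now $x_n=0$ is one equation, the walls indexed by $S-\{k\}$ give $|S|-1$ more, the coordinates in $\St$ are again pinned at $\tfrac12-a_i$ by an upper-lower face pair, and the remaining coordinate $x_k$ is the one free direction along which the upper and lower faces slide; solving the single upper-face equation \eqref{*} at $x_n=0$ for $x_k$ gives the stated value $a_k-\tfrac12+\eps_k-\tfrac{K(S)}{2a_k-\eps_k}$. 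The condition that this genuinely be a vertex of $\cR(P)$ — i.e. that the wall $x_k=a_k-\tfrac12+\eps_k$ is no longer active but was about to become active — is exactly $K(S)<0<K(S-\{k\})$, using that $K(S-\{k\})$ is obtained from $K(S)$ by moving $k$ from the $-\Delta$ side to the $+\Delta$ side. The ``truncating'' vertices are the mirror image: when some face of $\cRt(P)$ pokes above $x_n=1$, we intersect with the hyperplane $x_n=1$ (the bisector to $P+2e_n$), and an analogous computation — now with $x_n=1$ as the pinning equation and $x_k$ the sliding coordinate, using $K(S)+K(\St)=1$ from \eqref{KSp} to rewrite things — gives \eqref{trun} under the condition $K(S\cup\{k\})<1<K(S)$.

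Finally I would argue completeness: every vertex of $\cR(P)$ arises from exactly one of these three patterns. Here the key structural input, already visible in the $n=2,3$ pictures in Section \ref{sec2}, is that $\cRt(P)$ is ``walls capped by two pyramids'' with the two pyramids disjoint except possibly (for $n\ge6$) along $x_n=0$, and that the only extra truncation is by $|x_n|\le1$; so at any vertex the set of active hyperplanes consists of some walls, at most one ``new'' constraint from $\{x_n=0,\ x_n=\pm1\}$, and pyramid faces filling out the rest, which is precisely the case division above. One must also check the three families are disjoint (their defining inequalities on $K(S)$ are mutually exclusive: $K(S)\in[0,1]$ vs.\ $K(S)<0$ vs.\ $K(S)>1$) and that no labeled set is double-counted. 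I expect the main obstacle to be this last completeness/non-redundancy step for $n\ge6$, where upper and lower pyramids interact along $x_n=0$: one has to rule out spurious vertices where \emph{two} of the special constraints, or a wall and a far pyramid face in an unexpected combination, become active, and to verify that the feasibility inequalities are not merely necessary but sufficient for each listed point to actually lie on $\partial\cR(P)$. The wall/pyramid computations themselves are routine linear algebra once the $\Delta_{a_i}$ substitution is in hand.
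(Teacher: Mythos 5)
Your overall strategy matches the paper's: classify vertices by the labeled set $S$ of wall-pinned coordinates, use the identity $(2a-\eps)(x-\tfrac12\eps)=\Delta_a$ at $x=\tfrac12-a$ to evaluate the slant constraints, and then produce the middle and truncating vertices by intersecting with $x_n=0$ and $x_n=\pm1$. But there are two genuine problems. First, your mechanism for pinning the free coordinates at a standard vertex is wrong: you say that for $i\in\St$ the coordinate $x_i=\tfrac12-a_i$ is ``determined by an upper face (\ref{*}) meeting a lower face (\ref{neg}).'' As the paper observes, equality in one constraint of type (\ref{*}) and one of type (\ref{neg}) forces $x_n=0$; a standard vertex $v_S^+$ with $K(S)>0$ lies on \emph{no} lower face. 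The correct mechanism is that $v_S^+$ lies simultaneously on all the upper faces having $\delta_i=\eps_i$ for $i\in S$ and $\delta_i\in\{0,1\}$ arbitrary for $i\in\St$; it is the coincidence $(2a_i)(x_i)=(2a_i-1)(x_i-\tfrac12)=\Delta_{a_i}$ at $x_i=\tfrac12-a_i$ that makes these $2^{|\St|}$ hyperplanes concurrent, and $|\St|+1$ of them are independent, which together with the $|S|$ walls gives the required $n$. A single upper--lower pair per free coordinate neither avoids forcing $x_n=0$ nor supplies enough independent equations to pin each $x_i$ individually.

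Second, the claim ``there are no other vertices'' is the substantive half of the theorem, and you explicitly defer it (``I expect the main obstacle to be this last completeness step''). The paper's completeness argument has three steps you would need to supply: (i) any point lying on hyperplanes of both types (\ref{*}) and (\ref{neg}) has $x_n=0$; hence (ii) all but one of the active type-(\ref{neg}) hyperplanes may be traded for the corresponding type-(\ref{*}) hyperplanes, reducing every configuration to walls plus upper faces plus at most one lower face; and (iii) when $K(S-\{k,\ell\})>K(S-\{k\})>0>K(S)$, the candidate point obtained by intersecting the segment from $v_S^+$ to $v_{S-\{k,\ell\}}^+$ with $x_n=0$ is not a new vertex, because it lies on the edge joining $v_{S,k}^0$ to $v_{S,\ell}^0$ (or to $v_{S-\{\ell\},k}^0$), all of which lie in a common face. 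Step (iii) is exactly the ``spurious vertex'' issue you flag for $n\ge6$, and it is where the real work lies; without it the proof is incomplete. The disjointness of the three families, which you do address, is the easy part.
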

Note that by (\ref{KSp}), $\cR(P)$ has truncating vertices for $\St$ iff it has middle vertices for $S$. Also note the notation: truncating vertices are distinguished from standard vertices by having a second subscript.
\begin{proof} Let $t=|\St|$. For $\cRt(P)$, we seek solutions of the inequalities of Definition \ref{def} satisfying $x_i=a_i-\frac12+\eps_i$ for $i\in S$ which have equality in $t+1$ independent equations of type (\ref{*}) and (\ref{neg}).
The following fact is very useful. For $\eps\in\{0,1\}$ and $0<a<\frac12$,
$$(2a-\eps)(x-\tfrac12\eps)=\begin{cases}\Delta_a&x=\tfrac12-a\\
-\Delta_a&x=a-\tfrac12+\eps\\
-\Delta_a+2a&\eps=0,\ x=a+\tfrac12\\
-\Delta_a+1-2a&\eps=1,\ x=a-\tfrac12.\end{cases}$$
Because of this, if, for $i\in S$,  a pair of inequalities (\ref{*}) (for $\delta_i=0$ and 1) both satisfy the inequality and one has equality, then it must be the case that $\delta_i=\eps_i$ and $(2a_i-\delta_i)(x_i-\frac12\delta_i)=\Delta_{a_i}$.
After setting $x_i=a_i-\frac12+\eps_i$ for $i\in S$, the relevant inequalities of Definition \ref{def} become
\begin{eqnarray}\label{one}x_n&\le&\tfrac12-\Delta(S)+\dsum_{i\in\St}(2a_i-\delta_i)(x_i-\tfrac12\delta_i)\\
x_n&\ge&-\tfrac12+\Delta(S)\label{two}-\dsum_{i\in\St}(2a_i-\delta_i)(x_i-\tfrac12\delta_i).\end{eqnarray}

Note that $v_S^+$ satisfies equality for all the inequalities of (\ref{one}), and it satisfies the inequalities (\ref{two}) iff $K(S)\ge0$. Thus, assuming $K(S)\ge0$, $v_S^+$ is a vertex of $\cRt(P)$, and similarly so is $v_S^-$. There can be no other vertices associated to $S$ with $t+1$ independent equations of type (\ref{one}) because $t+1$ linearly independent equations in $t+1$ variables can have at most one solution. Note that the system of equations associated with (\ref{one}) has rank $t+1$, as it is easily seen to reduce to the equations $x_i=\frac12-a_i$, $i\in\St$, and $x_n=\frac12-\Delta(S)+\sum_{\St}2a_ix_i$.

 Next we  consider the truncation. If $K(S\cup\{k\})<1<K(S)$, then $v^+_{S}$ lies above the hyperplane $x_n=1$, as does $v_T^+$ for any $T\subset S$. We obtain new vertices as the intersection of the edge between $v_{S\cup\{k\}}^+$ and $v_{S}^+$ with the hyperplane $x_n=1$. Note that for each labeled set $S$, there are two labeled sets $S\cup\{k\}$, corresponding to the two possible labels on $k$. The points $v_{S\cup\{k\}}^+$ and $v_{S}^+$ differ only in the $k$th and $n$th components, which are $(a_k-\frac12+\eps_k,K(S\cup\{k\}))$ for $v_{S\cup\{k\}}^+$, and $(\frac12-a_k,K(S\cup\{k\})+2\Delta_{a_k})$ for $v^+_{S}$. The point on the segment between them with $x_n=1$ has
\begin{equation}\label{tr}x_k=a_k-\tfrac12+\eps_k+\frac{1-2a_k-\eps_k}{2\Delta_{a_k}}(1-K(S\cup\{k\})),\end{equation}
which simplifies as claimed.

The middle vertices are obtained similarly by computing the point where the segment from $v_S^+$ to $v_{S-\{k\}}^+$ meets the hyperplane $x_n=0$. Since $k$ is in the labeled set $S$, there are two vertices $v_{S,k}^0$ connected to each vertex $v_{S-\{k\}}^+$. As explained in the paragraph after Definition \ref{def}, these occur when the intersection of the slant hyperplanes associated to (\ref{*}) with the walls associated to (\ref{30}) and (\ref{31}) corresponding to the labeled set $S$ meet at a negative value $K(S)$ of $x_n$. The slant hyperplanes here have $\delta_i=\eps_i$ for $i\in S$, so (\ref{one}) and (\ref{two}) apply.

The vertex $v_S^+$ satisfies the wall equations for the labeled set $S$ and (\ref{one}) for $\delta_i=0$ or 1 when $i\in\St$, but does not satisfy (\ref{two}). Choose any $k$ having $K(S-\{k\})>0$, and replace the wall hyperplane $x_k=a_k-\frac12+\eps_k$ by any hyperplane (\ref{two}). Comparing this equation with the corresponding equation in (\ref{one}) yields $x_n=0$. The new $x_k$ value is nicely found by linearity similarly to (\ref{tr}). Alternatively, one can verify that (\ref{xi}) satisfies equality in all (\ref{*}).

The diagram in Figure \ref{fig3} is quite representative. The horizontal axis is $x_k$ and the vertical axis is $x_n$. The vertical lines represent the walls $x_k=a_k\pm\frac12$, and the horizontal line represents the hyperplane $x_n=0$. The ``vertices'' $v_S^+$ (resp.~$v_S^-$) are not on the polytope since they lie outside the hyperplanes coming up from $v_{S-\{k\}}^-$ (resp.~down from $v_{S-\{k\}}^+$).

\begin{minipage}{6in}
\begin{fig}\label{fig3}
{\bf Formation of middle vertices}
\begin{center}
\begin{\tz}[scale=.5]
\draw (-5,-4) -- (1,5.6);
\draw (-1,5.6) -- (5,-4);
\draw (5,4) -- (-1,-5.6);
\draw (1,-5.6) -- (-5,4);
\draw (-5,0) -- (5,0);
\draw (-4,-4) -- (-4,4);
\draw (4,-4) -- (4,4);
\node at (1.5,4.1) {$v_{S-\{k\}}^+$};
\node at (1.5,-4.1) {$v_{S-\{k\}}^-$};
\node at (4.5,-2) {$v_S^+$};
\node at (-4.5,-2) {$v_S^+$};
\node at (4.5,2) {$v_S^-$};
\node at (-4.5,2) {$v_S^-$};
\node at (-1.5,.5) {$v_{S,k}^0$};
\node at (1.5,.5) {$v_{S,k}^0$};
\node at (6.2,0) {$x_n=0$};
\node at (0,4) [circle,fill,inner sep=1.2pt]{};
\node at (-2.5,0)  [circle,fill,inner sep=1.2pt]{};
\node at (0,-4) [circle,fill,inner sep=1.2pt]{};
\node at (2.5,0)  [circle,fill,inner sep=1.2pt]{};
\end{\tz}
\end{center}
\end{fig}
\end{minipage}

We now show that there are no additional vertices obtained as intersections of hyperplanes of type (\ref{one}) and type (\ref{two}). First note that any intersection of the two types of hyperplanes must occur at $x_n=0$. This follows since if for all vectors  $\delta=\la\delta_i\ra$
$$\tfrac12-\Delta(S)+\dsum_{\St}(2a_i-\delta_i)(x_i-\tfrac12\delta_i)\ge x_n$$
and
$$\tfrac12-\Delta(S)+\dsum_{\St}(2a_i-\delta_i)(x_i-\tfrac12\delta_i)\ge -x_n,$$
with equality for some $\delta$ of each type, then $x_n=0$.

We are looking for $t+1$ linearly independent hyperplanes associated to (\ref{one}) and (\ref{two}) intersecting at $x_n=0$. It is enough to consider that all but one of them are of type (\ref{one}). To see this, as just noted, one hyperplane of type (\ref{two}) forces $x_n=0$, which then implies that points $(x_1,\ldots,x_n)$ which satisfy any other equation of type (\ref{two}) also satisfy the corresponding equation of type (\ref{one}), and so all but one of the type-(\ref{two}) hyperplanes can be replaced by type-(\ref{one}) hyperplanes.

As already noted, if $K(S)<0$, the putative $v_S^+$ does not satisfy (\ref{two}), but if also
 $K(S-\{k\})>0$, there will be a vertex
$v_{S-\{k\}}^+$, and $v_{S,k}^0$ can be found as above. If $K(S-\{k,\ell\})>K(S-\{k\})>0>K(S)$, there will not be an additional vertex obtained by intersecting a segment from $v_S^+$ to $v^+_{S-\{k,\ell\}}$ with $x_n=0$ because it would lie on an edge from $v_{S,k}^0$ to
$$\begin{cases}v_{S,\ell}^0&\text{if }K(S-\{\ell\})>0\\
v_{S-\{\ell\},k}^0&\text{if }K(S-\{\ell\})<0.\end{cases}$$
These vertices all have $x_i=\frac12-a_i$ for $i\in\St$, and $x_i=a_i-\frac12+\eps_i$ for $i\in S-\{k,\ell\}$, and lie in the plane (for simplicity, let $\eps_k=\eps_\ell=0$)
$$x_n=\tfrac12+2a_kx_k+2a_\ell x_\ell -\Delta(S-\{k,\ell\})+\Delta(\St).$$
See Figure \ref{fig5}.
 \end{proof}

\begin{minipage}{6in}
 \begin{fig}\label{fig5}
 {\bf Face containing middle vertices}
\begin{center}
\begin{\tz}[scale=.7]
\draw (-1,1.5) -- (1,1.5) -- (2,3) -- (0,4) -- (-2,3) -- (-1,1.5);
\draw (6,1.5) -- (8,1.5) -- (9,3) -- (7,4) -- (6,1.5);
\draw [dotted,thick] (-1,1.5) -- (0,0) -- (1,1.5);
\draw [dotted,thick] (7,0) -- (8,1.5);
\draw [dotted,thick] (5.6,.5) -- (6,1.5);
\node at (0,-1) {$K(S-\{\ell\})>0$};
\node at (7,-1) {$K(S-\{\ell\})<0$};
\node at (0,4) [circle,fill,inner sep=1.2pt]{};
\node at (2,3) [circle,fill,inner sep=1.2pt]{};
\node at (1,1.5) [circle,fill,inner sep=1.2pt]{};
\node at (-1,1.5) [circle,fill,inner sep=1.2pt]{};
\node at (-2,3) [circle,fill,inner sep=1.2pt]{};
\node at (6,1.5) [circle,fill,inner sep=1.2pt]{};
\node at (8,1.5) [circle,fill,inner sep=1.2pt]{};
\node at (9,3) [circle,fill,inner sep=1.2pt]{};
\node at (7,4) [circle,fill,inner sep=1.2pt]{};
\draw (0,0) circle [radius=3pt];
\draw (7,0) circle [radius=3pt];
\draw (5.6,.5) circle [radius=3pt];
\node at (0,4.5) {$v^+_{S-\{k,\ell\}}$};
\node at (7,4.5) {$v^+_{S-\{k,\ell\}}$};
\node [right] at (1,1.5) {$v_{S,k}^0$};
\node [right] at (8,1.5) {$v_{S,k}^0$};
\node [right] at (2,3) {$v_{S-\{k\}}^+$};
\node [right] at (9,3) {$v_{S-\{k\}}^+$};
\node [right] at (0,0) {$v_S^+$};
\node [right] at (7,0) {$v_S^+$};
\node [left] at (-1,1.5) {$v_{S,\ell}^0$};
\node [left] at (-2,3) {$v_{S-\{\ell\}}^+$};
\node [left] at (5.6,.5) {$v_{S-\{\ell\}}^+$};
\node [left] at (6,1.5) {$v_{S-\{\ell\},k}^0$};
\end{\tz}
\end{center}
\end{fig}
\end{minipage}

The diagram in Figure \ref{fig4} of a fictitious polytope in $\R^3$ can be very useful in visualizing the middle vertices as they sit in the whole polytope. This polytope is fictitious in two ways. First, the actual polytopes associated to $K_3$ do not have middle vertices, and second, whenever middle vertices occur, then truncating vertices occur, too, but that is not the case in our diagram. This represents a polytope with $K(\{1\})>0$, $K(\{2\})>0$, and $K(\{1,2\})<0$. Subscripts on the numbers 1 and 2 refer to the labeling, $-$ if $\eps_i=0$, and $+$ if $\eps_i=1$. Thus, for example, $v_{1_+2_+,2}^0$ is the middle vertex $v_{S,2}^0$ with $S=\{1,2\}$ and $\eps_1=\eps_2=1$.  Heights (i.e., $x_3$ values) decrease as the distance from $v_\emptyset^+$ increases in the diagram. The bottom vertex $v_\emptyset^-$ is the point at $\infty$. All edges and faces are indicated. The labeling for faces is that, for example, $1_-$ means the wall $x_1=a_1-\frac12$, while $(1_-2_+)^\pm$ means the hyperplane $x_3=\pm(\frac12+2a_1x_1+(2a_2-1)(x_2-\frac12))$. The diagram shows very clearly how vertices are intersections of three independent hyperplanes, and how the intersection of a slant hyperplane with its negative gives an edge at height 0.

\begin{minipage}{6in}
\begin{fig}\label{fig4}
{\bf Fictitious polytope}
\begin{center}
\begin{\tz}[scale=.6]
\node at (0,0) {$v_{\emptyset}^+$};
\node at (3,0) {$v_{1_+}^+$};
\node at (0,3) {$v_{2_+}^+$};
\node at (-3,0) {$v_{1_-}^+$};
\node at (0,-3) {$v_{2_-}^+$};
\node at (4.5,1.5) {$v_{1_+2_+,2}^0$};
\node at (1.9,4.5) {$v_{1_+2_+,1}^0$};
\node at (-1.9,4.5) {$v_{1_-2_+,1}^0$};
\node at (-4.5,1.5) {$v_{1_-2_+,2}^0$};
\node at (-1.9,-4.5) {$v_{1_-2_-,1}^0$};
\node at (1.9,-4.5) {$v_{1_+2_-,1}^0$};
\node at (4.5,-1.5) {$v_{1_+2_-,2}^0$};
\node at (6,0) {$v_{1_+}^-$};
\node at (0,6) {$v_{2_+}^-$};
\node at (-6,0) {$v_{1_-}^-$};
\node at (0,-6) {$v_{2_-}^-$};
\draw (.4,5.6) -- (1.0,4.9);
\draw (1.8,4) -- (3.9,2);
\draw (4.8,1) -- (5.8,.3);
\draw (6.4,0) -- (8.4,0);
\draw (5.8,-.2) -- (4.8,-1.2);
\draw (4,-1.9) -- (2.1,-4.2);
\draw (1.3,-4.9) -- (.3,-5.7);
\draw (0,-6.5) -- (0,-7.5);
\draw (0,-.5) -- (0,-2.6);
\draw (0,.5) -- (0,2.5);
\draw (.4,-3.4) -- (1.4,-4);
\draw (.4,0) -- (2.45,0);
\draw (-2.6,0) -- (-.4,0);
\draw (3.4,.4) -- (4.1,1.1);
\draw (3.4,-.4) -- (4,-1);
\draw (.4,3.4) -- (1.4,4.1);
\draw (0,6.4) -- (0,7.4);
\draw (-.4,5.6) -- (-1.4,4.9);
\draw (-1.4,4) -- (-.3,3.4);
\draw (-2.2,4) -- (-4.1,1.8);
\node at (-4.5,-1.5) {$v_{1_-2_-,2}^0$};
\draw (-5.6,.4) -- (-4.8,1.1);
\draw (-6.5,0) -- (-7.5,0);
\draw (-4,1.1) -- (-3.1,.4);
\draw (-5.6,-.4) -- (-4.8,-1.1);
\draw (-3.6,-1.8) -- (-2,-3.8);
\draw (-1,-5) -- (-.2,-5.6);
\draw (-1.4,-4.2) -- (-.2,-3.3);
\draw (-4.1,-1.1) -- (-3.2,-.4);
\node [red] at (4.5,0) {$1_+$};
\node [red] at (-4.5,0) {$1_-$};
\node [red] at (0,4.5) {$2_+$};
\node [red] at (0,-4.5) {$2_-$};
\node [red] at (4.5,5) {$(1_+2_+)^-$};
\node [red] at (-4.5,5) {$(1_-2_+)^-$};
\node [red] at (4.5,-5) {$(1_+2_-)^-$};
\node [red] at (-4.5,-5) {$(1_-2_-)^-$};
\node [red] at (2,2) {$(1_+2_+)^+$};
\node [red] at (-2,2) {$(1_-2_+)^+$};
\node [red] at (2,-2) {$(1_+2_-)^+$};
\node [red] at (-1.9,-2) {$(1_-2_-)^+$};
\end{\tz}
\end{center}
\end{fig}
\end{minipage}

We illustrate Theorem \ref{verthm} by describing the vertices of $\cR(P)$ for $P\in (0,\frac12)^{n-1}\times\{0\}$ and $n\le6$. Since $0<\Delta_a\le\frac18$, $K(S)\ge0$ is satisfied if $|S|\le 4$. Thus for $n\le5$, all standard vertices exist and there are no middle or truncating vertices. The only slight deviation is that if $n=5$, $|S|=4$, and $a_1=a_2=a_3=a_4=\frac14$, then $x_n=0$, so $v_S^+=v_S^-$.

For $n=6$, $\cR(P)$ depends on  $Z:=\sum_{i=1}^5 (a_i-\frac14)^2$. If $Z>\frac1{16}$, then standard vertices $v_S^\pm$ exist for all labeled sets $S$. There are $2\cdot3^5$ vertices, since for $i\in[\![5]\!]$, either $\eps_i=0$ or 1 or $i\not\in S$. If $Z=\frac1{16}$, there are standard vertices for all $S$ except that if $|S|=5$, then $v_S^+=v_S^-$. If $Z<\frac1{16}$,  then $0<K(S)<1$ for $1\le|S|\le4$, while $K(S)>1$ if $|S|=0$, and $K(S)<0$ if $|S|=5$.  There are standard vertices $v_S^\pm$ for all $S$ with $1\le|S|\le4$, and middle vertices $v_{[\![5]\!],k}^0$ for all labelings of $[\![5]\!]$. There are truncating vertices $v^\pm_{\emptyset,k}$ for the ten labeled sets $\{k\}$.

Using (\ref{KSp}), Theorem \ref{verthm} shows that the types of vertices that occur in $\cR(P)$ depend on the sign of $K(S)$ for each $S\subset[\![n-1]\!]$, and, for a fixed vertex type, the coordinates of the vertices are continuous functions of the $a_i$'s. We now show that equivalence of vertices depends just on the sets $S$ (without regard for labeling) and whether $K(S)=0$.
\begin{prop}\label{equivthm} If $P\in(0,\frac12)^{n-1}\times I$, the only equivalences of vertices of $\cR(P)$ are
\begin{itemize}
\item[a.] If $S=S'$ as sets, then $v_S^+\sim v_{S'}^+$, $v_S^-\sim v_{S'}^-$, $v_{S,k}^0\sim v_{S',k}^0$, $v_{S,k}^+\sim v_{S',k}^+$, and $v_{S,k}^-\sim v_{S',k}^-$, and if also $K(S)=0$ or 1, then $v_S^+\sim v_{S'}^-$.
\item[b.] If $S'=\St$ as sets, then $v_S^+\sim v_{S'}^-$, and if also $K(S)=0$ or 1, then $v_S^+\sim v_{S'}^+$ and $v_S^-\sim v_{S'}^-$.
\item[c.] If $k\in S$ and $K(S)<0<K(S-\{k\})$, then $v_{S,k}^0\sim v_{\St,k}^\pm$ for any labelings.
\end{itemize}
\end{prop}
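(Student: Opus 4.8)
The plan is to understand the equivalence relation $\sim$ on vertices of $\cR(P)$ by tracking, for each vertex $v$, the set of points of $C(P)$ lying in $p^{-1}(p(v))$, i.e.\ the lattice translates of $v$ under $\sim$ that are ``nearby.'' The key structural input is Theorem~\ref{verthm}, which gives explicit coordinates for all vertices. Recall from \eqref{rel} that $\sim$ is generated by $x\mapsto x+e_i$ for $i<n$ and by the ``flip'' $\phi\colon(x_1,\dots,x_{n-1},x_n)\mapsto(1-x_1,\dots,1-x_{n-1},x_n+1)$. So to detect an equivalence $v\sim w$ between two vertices we must exhibit a word in these generators carrying $v$ to $w$. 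First I would record the effect of $\phi$ and of the translations on the three coordinate formulas \eqref{36}, \eqref{xi}, \eqref{trun}. The decisive computation is what $\phi$ does to a standard vertex $v_S^\pm$: for $i\in S$ the coordinate $a_i-\tfrac12+\eps_i$ goes to $\tfrac12-a_i+(1-\eps_i)$, which (after a translation by $-e_i$ if $\eps_i=0$, or none if $\eps_i=1$) is exactly $\tfrac12-a_i$, the $\St$-type coordinate; symmetrically, for $i\in\St$ the coordinate $\tfrac12-a_i$ goes to $a_i-\tfrac12+1$, which after the appropriate integer translation becomes an $S$-type coordinate $a_i-\tfrac12+\eps_i$. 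Meanwhile $x_n=\pm K(S)$ goes to $1\mp K(S)$, and by \eqref{KSp} this equals $K(\St)$ when the sign was $+$, i.e.\ $\phi$ sends $v_S^+$ to a lattice translate of $v_{\St}^-$. This is precisely statement (b). Statement (a) comes even more cheaply: two standard vertices $v_S^\pm$, $v_{S'}^\pm$ with $S=S'$ as sets differ only in the values $\eps_i$ versus $\eps_i'$ for $i\in S$, and $(a_i-\tfrac12+\eps_i)-(a_i-\tfrac12+\eps_i')=\eps_i-\eps_i'\in\{-1,0,1\}$, so the two vertices differ by the integer translation $\sum_{i\in S}(\eps_i-\eps_i')e_i$; the $x_n$ coordinate is $\pm K(S)$ in both cases since $K(S)$ depends only on the underlying set. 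The analogous check for middle and truncating vertices (same $S$ as a set, $k$ fixed) is identical: changing the labels changes each relevant coordinate by an integer, and the formulas for the $k$th and $n$th coordinates in \eqref{xi}, \eqref{trun} involve only the sets and $K$-values, not the labels — except that $\tfrac{K(S)}{2a_k-\eps_k}$ does depend on $\eps_k$, so here one must also check that the shift in $x_k$ coming from $\eps_k\ne\eps_k'$ is still an integer, which follows from the identity $\tfrac{1-2a_k-0}{2\Delta_{a_k}}-\tfrac{1-2a_k-1}{2\Delta_{a_k}}\cdot(\text{sign bookkeeping})$ collapsing as in \eqref{tr}. The extra clauses ``if $K(S)=0$ or $1$'' are where $v_S^+=v_S^-$ (when $K(S)=0$, by Theorem~\ref{verthm}) or where a standard vertex sits on the truncating hyperplane $x_n=1$ (when $K(S)=1$), so the $\pm$ distinction collapses and the equivalences of (a),(b) merge accordingly.

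For statement (c): when $k\in S$ and $K(S)<0<K(S-\{k\})$, the middle vertex $v_{S,k}^0$ lies at $x_n=0$ on the edge joining $v_{S-\{k\}}^+$ (above) to $v_{S-\{k\}}^-$ (below, via the reflected slant hyperplane), and $\St$ has a corresponding pair of truncating vertices $v_{\St,k}^\pm$ by the parenthetical remark after Theorem~\ref{verthm}. I would verify $v_{S,k}^0\sim v_{\St,k}^\pm$ by applying $\phi$ to the coordinate formula \eqref{xi} and comparing with \eqref{trun} for the set $\St$: for $i\in S-\{k\}$ and $i\in\St$ the coordinates transform exactly as in (b), the $k$th coordinate $a_k-\tfrac12+\eps_k-\tfrac{K(S)}{2a_k-\eps_k}$ is carried by $\phi$ (plus an integer shift) to $a_k-\tfrac12+\eps_k'+\tfrac{K(S)}{2a_k-\eps_k}$, and one rewrites $\tfrac{K(S)}{2a_k-\eps_k}=-\tfrac{1-K(\St)}{2a_k-\eps_k}\cdot(\pm1)$ using \eqref{KSp} together with the identity $K(S)=K((\St)\cup\{k\})$ read off from \eqref{Kdef} (note $(\St)\cup\{k\}$ has the same underlying set, up to the role of $k$, as $S$), landing on exactly the $k$th-coordinate formula of \eqref{trun} for the labeled set $\St$; and $x_n=0$ is sent by $\phi$ to $x_n=1$, matching the $+$ truncating vertex, while starting from the reflected picture gives the $-$ one. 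The two labelings of $k$ account for the two truncating vertices $v_{\St,k}^+$ and $v_{\St,k}^-$ simultaneously, as stated.

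Finally I would argue that these are the \emph{only} equivalences — this is the real content and the main obstacle. The point is that two vertices $v,w$ of $\cR(P)$ with $v\sim w$ must satisfy $d(v,w)$ equal to a short lattice displacement, so $w$ is one of the designated points of $C(P)$ relative to $v$ (more precisely, $w-v$ lies in the finite set of displacements $\{\sum_{i\in T}\pm e_i\}\cup\{\phi\text{-type displacements}\}$ allowed by the definition of $C(P)$ and the fact that $\cR(P)$ is a fundamental-domain-like polytope — its points are exactly those $K$-close to $P$, and two vertices are $\sim$-equivalent iff they are identified on the boundary $\partial\cR(P)$). I would make this precise by noting that any $\sim$-equivalence is realized by a single generator or by $\phi$ composed with translations, because $\cR(P)$ has diameter small enough (this uses $\Delta_a\le\tfrac18$, exactly as invoked in the $n=3$ discussion to keep the upper and lower pyramids disjoint) that no iterated word is needed; then a direct case analysis over the generator applied, using the explicit coordinates, shows the image of a standard/middle/truncating vertex under a translation $e_i$ is never again a vertex of $\cR(P)$ unless $i\in S$ (giving case (a)), and under $\phi$ produces the vertex named in (b) or (c) and no other. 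The bookkeeping is routine but voluminous; the one genuinely delicate point is ensuring completeness of the list of ``short displacements'' — i.e.\ that no unexpected coincidence of the quadratic expressions $K(S)$, $K(S')$ for $S\ne S'$, $S'\ne\St$ forces an additional identification — which I would handle by observing that for $P$ in the open region $(0,\tfrac12)^{n-1}\times I$ the values $\{a_i\}$ are generic enough that $K(S)=K(S')$ as functions forces $\{S,\St\}=\{S',\St[']\}$, and that the only degenerate overlaps at special $P$ are precisely the $K(S)\in\{0,1\}$ cases already accounted for in (a) and (b). $\qed$
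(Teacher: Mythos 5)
Your verification that the listed equivalences \emph{do} hold is essentially the paper's own computation: reduce to the explicit coordinates of Theorem \ref{verthm}, observe that changes of labeling are integer translations (part (a)), that the glide relation (\ref{rel}) pairs $S$ with $\St$ and shifts $x_n$ by an odd integer (part (b)), and check (c) on the $k$th and $n$th coordinates (the paper does this by showing the sum of the $x_k$-values in (\ref{xi}) and (\ref{trun}) equals the integer $\eps_k$ while the $x_n$-values differ by $1$). One slip to repair: the glide sends $x_n=\pm K(S)$ to $\pm K(S)+1$, not to $1\mp K(S)$; your stated conclusion $v_S^+\sim v_{\St}^-$ is still correct, since $K(S)+1$ and $-K(\St)=K(S)-1$ differ by $2$, but the intermediate identity as written is false.

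The genuine gap is in the ``only these'' half, which is half the content of the proposition. You defer the exclusion of all other identifications to ``routine but voluminous'' bookkeeping, and the one point you treat concretely is handled incorrectly: you propose to rule out coincidences among the quadratics $K(S)$, $K(S')$ by genericity of the $a_i$, but the proposition is asserted for \emph{every} $P\in(0,\frac12)^{n-1}\times I$, including highly non-generic points such as $a_1=\cdots=a_{n-1}$ (points with many relations among the $K(S)$ are exactly the lower-dimensional strata $M_\a$ that the rest of the paper needs), so no genericity is available --- and none is needed. The paper's completeness argument rests on a coordinatewise criterion your write-up never isolates: two points of $\cR(P)$ are equivalent iff $x_i-x_i'\in\Z$ for all $i\le n-1$ and $x_n-x_n'\in2\Z$, or $x_i+x_i'\in\Z$ for all $i\le n-1$ and $x_n-x_n'$ odd. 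Since $0<a_i<\frac12$, neither $2a_i$ nor $2a_i-1$ is an integer, so the first $n-1$ coordinates alone force $S=S'$ or $S'=\St$ as sets; coincidences of $K$-values never enter, and the remaining comparison is just $K(S)\mp K(S')$ modulo $2\Z$, giving the ``$K(S)=0$ or $1$'' clauses. What then still must be checked --- and is absent from your proposal --- is that middle vertices admit no unintended identifications; the paper excludes these by showing that under the hypotheses of (\ref{xi}) the coordinate $x_k$ can equal neither $a_k-\frac12+\eps_k$ nor $\frac12-a_k$. Without the criterion and this exclusion the uniqueness claim is not established. (Your appeal to the diameter of $\cR(P)$ and $\Delta_a\le\frac18$ to bound word length is also beside the point: any equivalence is realized by a single deck transformation, which is either a translation or the glide composed with a translation.)
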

\begin{proof} The general rule, which is immediate from the definition, is that
vertices $(x_1,\ldots,x_n)$ and $(x'_1,\ldots,x'_n)$ of $\cR(P)$ are equivalent iff either  $x_i-x'_i\in\Z$ for $1\le i\le n-1$ and $x_n-x_n'\in2\Z$, or
$x_i+x'_i\in\Z$ for $1\le i\le n-1$ and $x_n-x'_n\in 2\Z+1$.
Recall that $v_S^+=(x_1,\ldots,x_{n-1},a_n+K(S))$ and $v_S^-=(x_1,\ldots,x_{n-1},a_n-K(S))$, and similarly for $v_{S'}^\pm$.
First note that $x_i-x'_i\in\Z$ for $i\le n-1$ iff the sets $S$ and $S'$ are equal, and $x_i+x'_i\in\Z$ for $i\le n-1$ iff $S'=\St$.
Next note that
$$K(S)-K(S')=\begin{cases}0&\text{$S=S'$ as sets}\\ 2K(S)-1&S'=\St,\end{cases}$$
while the difference of the $n$th components of $v_S^+$ and $v_{S'}^-$ is given by
$$K(S)+K(S')=\begin{cases}2K(S)&\text{$S=S'$  as sets}\\ 1&S'=\St.\end{cases}$$
The proposition for standard vertices now follows easily from the general rule noted above, as do the other equivalences in part (a).

Part (c) is true since the sum of the $x_i$ values for the two vertices is an integer for all $i<n$, and the $x_n$ values differ by 1. We show this for the $x_k$ values by noting that the sum of the relevant $x_k$ values from (\ref{xi}) and (\ref{trun}) is
\begin{eqnarray*} &&2a_k-1+2\eps_k+\frac{K(S-\{k\})-K(S)}{2a_k-\eps_k}\\
&=&-1+2\eps_k+\frac{2a_k(2a_k-\eps_k)+2\Delta_{a_k}}{2a_k-\eps_k}\\&=&-1+2\eps_k+\frac{2a_k(1-\eps_k)}{2a_k-\eps_k}\\
&=&\eps_k\end{eqnarray*}
for $\eps_k\in\{0,1\}$. Other possible equivalences involving middle vertices are eliminated by showing that under the hypotheses of (\ref{xi}), $x_k$ cannot equal $a_k-\frac12+\eps_k$ or $\frac12-a_k$.
\end{proof}
An analogous result holds for $((0,\frac12)\cup(\frac12,1))^{n-1}$.

\section{Proof of Theorem \ref{thm1}}\label{pfsec}

We begin by showing how $\cD=(0,\frac12)^{n-1}\times(0,1)$ can be partitioned into subsets $M_\a$ as claimed in Theorem \ref{thm1}. The partitioning is done by the quadric surfaces $K(S)=0$ for subsets $S$ of $[\![n-1]\!]$. The partitioning just depends on $S$ as a set; the labeling on $S$ is used in describing vertices. Each set $M_\a$ is uniquely determined by a specification of whether, for each set $S$, $K(S)$ is positive, negative, or 0. We have described the rather simple partitioning  when $n=6$ in Theorem \ref{thm2}. We will now discuss the more-typical case $n=7$, as a precursor to the general proof.

When $n=7$,
$$K(S)=\tfrac{5-|S|}4+\dsum_{i\in S}2(a_i-\tfrac14)^2-\dsum_{i\in\St}2(a_i-\tfrac14)^2.$$
Since $0\le(a_i-\frac14)^2<\frac1{16}$, $K(S)>0$ if $|S|\le4$, but $K(S)$ can be positive or negative if $|S|=5$ or 6.  Note that by (\ref{KSp}), $K(\St)>1$ iff $K(S)<0$, so truncating vertices associated to $\St$ will occur exactly when middle vertices associated to $S$ occur. If $K(S)<0<K(S-\{k\})$, there are $2^{|S|}$ middle vertices $v_{S,k}^0$ and $2^{|\St|+2}$ truncating vertices $v_{\St,k}^\pm$, taking the labeling into account.
Domains in which one or more $K(S)$ equal 0 will have dimension less than 7, and will bound other regions.
Recall that the $n$th coordinate of points in $\cD$ does not play an important role in this part of the analysis. The description of the vertices of $\cR(P)$ assumed $a_n=0$; for arbitrary $a_n$, just add that amount to the $n$th coordinate of all vertices described in Theorem \ref{verthm}. The domain $M_\a$ will be a product with $(0,1)$ as its last factor.

We now describe the domains $M_\a$ in $(0,\frac12)^6$ when $n=7$, omitting the 7th factor. Let $b_i=a_i-\frac14$, so  $b_i\in(-\frac14,\frac14)$, and $\frac12K(S)=\frac{5-|S|}8+\sum_Sb_i^2-\sum_{\St}b_i^2$.
Let $Z=\sum_{i=1}^6b_i^2$. In the following list, $K(S)>0$ unless mentioned to the contrary. The regions $M_\a$ are of the following six types. For each, we tell (a) conditions on $b_i$, (b) conditions on $K(S)$, (c) types of vertices, and (d) dimension (incorporating also the $(0,1)$ last factor) and number of regions.
\begin{itemize}
\item[a.] $Z>\frac18$. $K(S)>0\ \forall S$. Have $v_S^\pm\ \forall S$. One such $M_\a$ of dimension 7.
\item[b.] $Z=\frac18$. $K(\sx)=0$. Have $v_S^\pm\ \forall S$ except $v_\sx^+=v_\sx^-$. One such region of dimension 6.
\item[c.] $Z<\frac18$; all $b_k^2<\frac12Z$. $K(\sx)<0$. Have $v_S^\pm$ for $1\le|S|\le5$, $v_{\sx,k}^0\ \forall k$, and $v_{\emptyset,k}^\pm\ \forall k$. One such region of dimension 7.
\item[d.] $Z<\frac18$; one $b_k^2=\frac12Z$. $K(\sx)<0$ and $K(\sxk)=0$. Have $v_S^\pm$ for $1\le|S|\le5$ with $v_\sxk^+=v_\sxk^-$. Also $v_{\sx,i}^0$ and $v_{\emptyset,i}^\pm$ for $i\ne k$. Six such regions of dimension 6.
\item[e.] $Z<\frac18$; $b_k^2=b_\ell^2=\frac12Z$; all other $b_i=0$. $K(\sx)<0$, $K(\sxk)=K([\![6]\!]-\{\ell\})=0$. Have $v_S^\pm$ for $1\le|S|\le5$ with $v_\sxk^+=v_\sxk^-$ and $v_{[\![6]\!]-\{\ell\}}^+=v_{[\![6]\!]-\{\ell\}}^-$. Also $v_{\sx,i}^0$ and $v_{\emptyset,i}^\pm$ for $i\ne k,\ell$. Fifteen such regions of dimension 2.
\item[f.] $Z<\frac18$; one $b_k^2>\frac12Z$. $K(\sx)<K(\sxk)<0$. Have $v_S^\pm$ for $S\ne\sx,\sxk,\{k\},\emptyset$, also $v_{\sxk,\ell}^0$ and $v_{\{k\},\ell}^\pm$ for all $\ell\ne k$. Also $v_{\sx,i}^0$ and $v_{\emptyset,i}^\pm$ for $i\ne k$. Six such regions of dimension 7.\end{itemize}

Continuing to let $b_i=a_i-\frac14$, and ignoring the $n$th component, for arbitrary $n$ regions $M_\a$ are determined by, for all subsets $S$ of $[\![n-1]\!]$, whether
$$\tfrac{n+3-2|S|}{16}+\dsum_Sb_i^2-\dsum_{\St}b_i^2$$
is positive, negative, or zero.
Which vertices of each type occur for $\cR(P)$ for $P$ in a region $M_\a$ is determined by the signs of the various $K(S)$, which are unique to the region. The coordinates of the vertices of $\cR(P)$ of various types are continuous  functions of the coordinates $(a_1,\ldots,a_n)$ of $P$. Moreover, equivalence of vertices is also determined by the sets $S$ and which $K(S)=0$, which is determined by $M_\a$.
The maps $\theta_P$ of Theorem \ref{thm1} send the various vertices of $\cR(P_0)$ in Theorem \ref{verthm} to the corresponding vertices of $\cR(P)$. The same formulas apply, just to different values of $a_i$.

We need to ``know'' the faces of the polytopes $\cR(P)$. All of our vertices are obtained as intersections of at least $n$ of the hyperplanes which defined the polytope in Definition \ref{def}
together with $x_n=a_n\pm1$.
All we need to say about the faces is that a $j$-face, for $j<n$, is a $j$-dimensional subset  which is the intersection of the polytope with exactly $n-j$ independent bounding hyperplanes. The face equals the convex hull of the vertices which lie in that intersection of hyperplanes.
 This justifies  the claims of the first paragraph of Theorem \ref{thm1} for $\cD=(0,\frac12)^{n-1}\times(0,1)$.

 For $\cD=(\frac12,1)^{n-1}\times(0,1)$, all formulas are extremely similar to those for $(0,\frac12)^{n-1}\times(0,1)$, as noted near the end of Section \ref{sec2}. We combine them into $((0,\frac12)\cup(\frac12,1))^{n-1}\times(0,1)$, although we could consider them separately. For $((0,\frac12)\cup(\frac12,1))^{n-1}\times\{0\}$, the polygons $\cR(P)$ are essentially the same as those we have been considering. We separate it to avoid continuity problems due to the relation (\ref{rel}).

After permuting variables, a general $\cD$ can be written as $$\{0,\tfrac12\}^j\times((0,\tfrac12)\cup(\tfrac12,1))^{n-1-j}\times D_n.$$ For $P=(a_1,\ldots,a_n)$ in this $\cD$,
$$\cR(P)=\prod_{i=1}^j[a_i-\tfrac12,a_i+\tfrac12]\times \cR(a_{j+1},\ldots,a_n)$$
with $a_i-\tfrac12\sim a_i+\frac12$ and $\cR(a_{j+1},\ldots,a_n)$ as previously described. This can be seen by observing that the inequalities (\ref{*}) and (\ref{neg}) will not have the terms for $i\le j$ because the segment from $P$ to points of $C(P)$ as in (\ref{CP}) will involve no change in $x_i$. The domains $M_\a$ here will be $\{x\}\times M_\a'$, where $x\in\{0,\frac12\}^j$ and $M_\a'$ is a domain that works for $\cR(a_{j+1},\ldots,a_n)$.

 We now explain why part (a) of Theorem \ref{thm1} is true. Suppose a sequence $P_i\in M_\a\subset\cD$ converges to $P\in M_{\a'}\subset\cD'$.

 {\bf Case 1.} $\cD'=\cD$: Let $N=n-1$. Let $\cD=\{P=(b_1,\ldots,b_N):-\frac14<b_i<\frac14\}$. For $S\subset[\![N]\!]$, define $K_S:\cD\to\R$ by
$$K_S(P)=\tfrac{N+4-2|S|}{16}+\dsum_Sb_i^2-\dsum_{\St} b_i^2.$$
This equals $K(S)$ defined in (\ref{Kdef}).
Note that $S\subsetneqq S'$ implies $K_{S'}(P)<K_S(P)$. Let $V_S=\{P:K_S(P)=0\}$.

For a function $\a:\cP([\![N]\!])\to\{-1,0,1\}$ which satisfies that if $S\subsetneqq S'$, then $\a(S')\le\a(S)$ with $\a(S')<\a(S)$ if either is 0, let $$M_\a=\{P\in\cD:\on{sgn}(K_S(P))=\a(S)\ \forall S\}.$$
Note that $M_\a$ is an open subset of the variety $V(\a):=\ds\bigcap_{\a(S)=0}V_S$. Let $d=\dim(V(\a))=\dim(M_\a)$.
\begin{prop} If $\la P_i\ra$ in $M_\a$ approaches $P\in M_{\a'}$, then either $\a=\a'$ or $\dim(M_{\a'})<d$.\end{prop}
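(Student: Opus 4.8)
The plan is to use the continuity of the functions $K_S$ together with the combinatorial constraints on the sign-functions $\a,\a'$. First I would observe that since $\la P_i\ra\to P$ and each $K_S$ is a polynomial (hence continuous), we have $K_S(P_i)\to K_S(P)$ for every $S\subseteq[\![N]\!]$. Consequently, if $\a(S)=0$ for some $S$ — i.e.\ $K_S(P_i)=0$ for all $i$ — then $K_S(P)=0$, so $\a'(S)=0$; and if $\a(S)\ne 0$, then $\a'(S)\in\{0,\a(S)\}$ (the sign can only degenerate to $0$ in the limit, never flip). Thus the zero set of $\a$ is contained in the zero set of $\a'$: $\{S:\a(S)=0\}\subseteq\{S:\a'(S)=0\}$.

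Next I would split into two cases. If $\{S:\a(S)=0\}=\{S:\a'(S)=0\}$, then I claim $\a=\a'$: for any $S$ with $\a(S)=1$ we cannot have $\a'(S)=-1$ (sign cannot jump past $0$ without $K_S(P)=0$, which would put $S$ in the common zero set), nor $\a'(S)=0$ (same reason), so $\a'(S)=1$, and symmetrically for $\a(S)=-1$; hence $\a=\a'$ and in particular $M_\a=M_{\a'}$. If instead the containment $\{S:\a(S)=0\}\subseteq\{S:\a'(S)=0\}$ is proper, then $V(\a')=\bigcap_{\a'(S)=0}V_S$ is a proper subvariety cut out by strictly more equations than $V(\a)$, so one expects $\dim V(\a')<\dim V(\a)=d$, giving $\dim(M_{\a'})<d$ as required.

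The main obstacle is the last dimension claim: adding equations to a system of quadrics does not \emph{automatically} drop the dimension — one must rule out that the extra hypersurface $V_S$ (for the new $S$ with $\a'(S)=0$) already contains the relevant component of $V(\a)$. Here is where I would use the structural fact, noted in the excerpt, that $S\subsetneqq S'$ forces $K_{S'}(P)<K_S(P)$ strictly, together with the explicit form $K_S(P)=\tfrac{N+4-2|S|}{16}+\sum_{S}b_i^2-\sum_{\St}b_i^2$. The idea is that near $P$, the component of $V(\a)$ through $P$ is smooth (it is an open piece $M_\a$ of the variety, and $P$ lies in its closure), and the gradient of the new constraint $K_S$ is not a linear combination of the gradients of the old constraints $\{K_{S'}:\a(S')=0\}$ at $P$, because the functions $K_S$ are affine-linear in the variables $u_i:=b_i^2$ with distinct $\{0,\pm1\}$-coefficient vectors; distinctness of these coefficient vectors (indexed by which side of the partition each $i$ lies on) makes the corresponding linear forms independent, so each genuinely new equation cuts the dimension down by one. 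I would make this precise by passing to the $u_i$-coordinates, where all the $K_S$ become affine and the dimension count becomes a statement about ranks of $\{0,1,-1\}$-matrices; the constraint $-\tfrac14<b_i<\tfrac14$ i.e.\ $0\le u_i<\tfrac1{16}$ keeps us in the region where this substitution is a local homeomorphism away from the coordinate hyperplanes, and the boundary cases $b_i=0$ are handled separately exactly as elsewhere in the paper (by having removed such loci into lower-dimensional strata). This reduces the whole proposition to elementary linear algebra over the sign pattern $\a$.
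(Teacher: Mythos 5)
Your overall strategy coincides with the paper's: continuity of the $K_S$ gives that the zero set of $\a$ is contained in that of $\a'$ and that a nonzero sign can only degenerate to $0$ in the limit, so either $\a=\a'$ or there is some $S_0$ with $\a'(S_0)=0$ and $\a(S_0)\ne0$; one then argues that the extra equation $K_{S_0}=0$ drops the dimension, working with the system which is affine in the variables $b_i^2$. This first half of your argument is correct and matches the paper.

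The gap is in your justification of the dimension drop. You assert that because the coefficient vectors of the forms $K_S$ (in the coordinates $u_i=b_i^2$) are distinct vectors in $\{\pm1\}^N$, ``each genuinely new equation cuts the dimension down by one.'' Distinctness of $\{\pm1\}$-vectors does not imply linear independence: any three distinct vectors in $\R^2$ are dependent, and for the forms at hand one even has the exact relation $K_S+K_{\St}=\tfrac12$, so the coefficient vectors of $K_S$ and $K_{\St}$ are negatives of each other. Hence you cannot conclude that the new constraint is transverse to the old ones, and you must confront the possibility that $K_{S_0}$, restricted to $V(\a)$, is already determined by the equations $\{K_S=0:\a(S)=0\}$. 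The paper resolves exactly this point by row-reducing the affine system in the $u_i$ and substituting into $K_{S_0}=0$: either the resulting equation in the remaining free variables is not identically zero, in which case it genuinely cuts $V(\a)$ and $\dim V(\a')<d$; or it is identically zero, in which case $K_{S_0}$ vanishes on all of $V(\a)$, hence on the nonempty set $M_\a$, contradicting $\a(S_0)\ne0$. This second branch of the dichotomy is the piece your argument is missing — you correctly name the obstacle, but your proposed resolution (gradient independence from distinctness of sign vectors) does not close it. Once the independence claim is replaced by this elimination dichotomy, the proof is complete; the smoothness of $M_\a$ near $P$ that you invoke is not needed.
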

\begin{proof} Since $K_S(P)$ is a continuous function of $P$, if $\a(S)=0$, then $\a'(S)=0$, while if $\a(S)\ne0$, then $\a'(S)=0$ or $\a(S)$. Thus if $\a'\ne\a$, there must be a set $S_0$ with $\a'(S_0)=0$ and $\a(S_0)\ne0$. The points $(b_1,\ldots,b_N)$ in $V(\a)$ are those which satisfy a linear system in $b_i^2$, with a row for each $S\in\a^{-1}(0)$. This linear system can be row reduced to a matrix which (after permuting variables) expresses $b_1^2,\ldots,b_{N-d}^2$ in terms of $b_{N-d+1}^2,\ldots,b_N^2$. Use these reduced rows to eliminate the first $N-d$ variables from the equation $K_{S_0}(P)=0$. If the obtained equation involving $b_{N-d+1}^2,\ldots,b_N^2$ is not identically 0, then it shows that $\dim(V(\a'))<d$.
If it is identically 0, this says that any $P$ which satisfies $K_S(P)=0$ for all $S\in\a^{-1}(0)$ must satisfy $K_{S_0}(P)=0$. Thus all $P\in M_\a$ have $K_{S_0}(P)=0$, and so $\a(S_0)=0$, contradiction.
\end{proof}

 {\bf Case 2.} $\cD'$ is formed from $\cD$ by changing $(0,1)$ in the last factor to $\{0\}$. This case is easy, since the $M_\a$'s for $\cD'$ are obtained from those of $\cD$ by changing the last factor from $(0,1)$ to $\{0\}$. Our considerations of $M_\a$ have generally been just with regard to the first $n-1$ variables. Since all 0's of $M_\a$ hold for $M_{\a'}$, the dimension with respect to the first $n-1$ variables of $M_{\a'}$ is $\le$ that of $M_\a$, but its dimension is smaller due to the last factor.

{\bf Case 3.} $\cD'$ is formed from $\cD$ by changing one or more $(0,\frac12)$ factors to $\{0\}$. (Other cases similar.) For simplicity, let's say that it is just the $(n-1)^{\text{st}}$ factor of $(0,\frac12)$ which is changed to 0. We consider the subsets $S$ of $[\![n-2]\!]$ which determine vertices of polytopes for $P$ in $(0,\frac12)^{n-2}\times\{0\}\times (0,1)$, and via the function $K$, determine the regions $M_{\a'}$ of $(0,\frac12)^{n-2}\times\{0\}\times (0,1)$. The key point is that, since $\ds\lim_{a\to0}\Delta_a=0$, if a sequence $P_i\in M_\a$ converges to $P\in M_{\a'}$, then for any $S\subset[\![n-1]\!]$,
$$\lim_{P_i\to P}K(S)(P_i)=K(S\cap[\![n-2]\!])(P).$$
Since, for $S\subset[\![n-2]\!]$, $K(S)>K(S\cup\{n-1\})$, it cannot happen that two different $S\subset[\![n-1]\!]$ have $K(S)=0$ in $M_\a$ but yield only one such when intersected with $[\![n-2]\!]$. So, similarly to Case 1, $\cD'$ has as many restricting equations as does $\cD$, but in a domain one dimension smaller.

If instead $P$ is in a set equivalent to $M_{\a'}\subset\cD'$, then it is similar to Case 2 or 3 above with one or more factors changed to $\{1\}$ instead of $\{0\}$. Our $M_\a$'s have been defined to be in various $\cD\subset[0,1)^n$. For example, if $$\cD=\{0,\tfrac12\}\times\{0,\tfrac12\}\times((0,\tfrac12)\cup(\tfrac12,1))^{n-3}\times(0,1),$$
the $M_\a$'s in the subset with 0 in the first two coordinates admit homeomorphic and equivalent $M_{\a'}$'s with the first two coordinates changed to 1, and the argument in Case 2 above shows that if a sequence in some $M_\a$ approaches a point $P=(1,1,a_3,\ldots,a_n)$, then the dimension of the new $M_{\a'}$ containing $P$ is less than that of $M_\a$. Similarly, if $P$, the limit of a sequence in $M_\a$, has final component 1, then it lies in a set $M_{\a'}$ which is homeomorphic  (under a $x\leftrightarrow 1-x$ correspondence in other components) to one of the $M_{\a'}$'s already considered, and its dimension is less than that of $M_\a$ by the argument in Case 3 above.
This completes the proof of part (a) of Theorem \ref{thm1}.

The proof of part (b) of Theorem \ref{thm1} involves two issues: (i) that vertices of $\cR(P_i)$ converge to vertices of $\cR(P)$, and (ii) that faces approach faces, of equal or smaller dimension. (i) is trivial if $P\in M_\a$. If $P$ is in the same $\cD$-set as $M_\a$, but a different $M_\a$, then it must be the case that, for some $S$, either $K(S)(P_i)$ or $K(S-\{k\})(P_i)$ approaches 0 in (\ref{xi}) or $K(S)(P_i)$ or $K(S\cup\{k\})(P_i)$ approaches 1 in (\ref{trun}). By computing the limiting value of  $x_k$  in each case, we see that $v_{S,k}^0\to v_S^+=v_S^-$, $v_{S,k}^0\to v_{S-\{k\}}^+=v_{S-\{k\}}^-$, $v_{S,k}^\pm\to v_S^\pm$, or $v_{S,k}^\pm\to v_{S-\{k\}}^\pm$ in the four cases, so a ``standard'' vertex is obtained in the limit. If $P_i$ approaches a point with some component equal to 0 or $\frac12$, then, in the formula $K(S)=\frac12-\Delta(S)+\Delta(\St)$, the limit as $a_i$ approaches 0 or $\frac12$ equals the value it would have if the $i$th component of $S$ was omitted, so a vertex is obtained in the limit.

Regarding (ii): Vertices in a $j$-face $F$ of $\cR(P_0)$ satisfy $n-j$ independent equalities in Definition \ref{def}, perhaps including also $x_n=a_n\pm1$ or $x_n=0$. The corresponding vertices of $\cR(P_i)$ satisfy the same equalities. For example, if the vertices of $F$ satisfy $x_n=\frac12+\sum 2a_j(P_0)x_j$, then vertices of $\cR(P_i)$ satisfy $x_n=\frac12+\sum2a_j(P_i)x_j$ by the definition of $\theta_{P_i}$, and since $a_j(P_i)$ approaches $a_j(P)$, the same is true in $\cR(P)$. So the vertices of $\cR(P)$ satisfy at least $n-j$ independent equalities of the appropriate form, and hence span a face of dimension $\le j$.

We amplify on the word ``independent'' by showing that the rank of a system of such equations at the limit point cannot be less than that for the points in the sequence, by the nature of the equations. After inserting values $x_i=a_i-\frac12+\eps_i$ of wall variables, and then subtracting one equation (associated to (\ref{one})) from all the others, the system is effectively a matrix of 0's and 1's determined by the differences of the values of $\delta_i$'s from (\ref{one}) in the equations being subtracted. These do not change under taking limits. The system at the limit point could involve more equations than those in the sequence, for example if a sequence of points on a slant hyperplane approach a point on a wall, and this would decrease the dimension of the face.

\section{Lower bounds}

\begin{thm}
The geodesic complexity of the higher Klein bottle $K_n$ satisfies
\[ \GC(K_n) \ge 2n.\]
\end{thm}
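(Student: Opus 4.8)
The plan is to prove the reverse inequality directly: any decomposition $K_n\times K_n=\bigsqcup_{i=0}^k E_i$ into ENRs carrying continuous geodesic sections $s_i\colon E_i\to GK_n$ of $\pi$ must have $k\ge 2n$. The starting point is the explicit description of the total cut locus $C\subset K_n\times K_n$ from Section~2 as the image under $p\times p$ of the pairs $(P,Q)$ with $Q\in\partial\cR(P)$, together with the vertex computation of Theorem~\ref{verthm}. First I would stratify $C$ by the combinatorial type of $\cR(P)$: for $P$ in a top-dimensional region $M_\a$ and $Q$ in the relative interior of a $j$-face of $\cR(P)$, the number of minimal geodesics from $p(P)$ to $p(Q)$ is governed by the number of bounding hyperplanes of $\cR(P)$ through $Q$, and (using Proposition~\ref{equivthm} to control the identifications) the deepest stratum $V$—coming from the vertices $v_S^\pm$—is an analytic $n$-dimensional submanifold of $K_n\times K_n$ along which several minimal geodesics come together. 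Via the straight-line parametrisation $Q\mapsto p\circ\sigma_{P,Q}$ and the local models $\theta_P$ of Theorem~\ref{thm1}, the geodesic lifts over a neighbourhood of a point $b\in V$ form an explicit finite set of ``branches'' that are separated in $GK_n$.

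Next I would establish the local obstruction. If $E$ is an ENR with a continuous geodesic section $s$ and $b\in V\cap E$, continuity of $s$ at $b$ forces $s$ to take values, on a neighbourhood of $b$ in $E$, in a single one of the branches above; hence $E$ cannot meet, near $b$, the set of pairs at which that branch fails to be a \emph{minimal} geodesic. Transporting the picture to a normal slice at $b$ by the homeomorphisms $\theta_P$, one obtains an explicit local model: a ball $B\cong\R^{2n}$, the cut locus appearing as a union of hyperplane-germs (the ``ties'' between pairs of branches) meeting along $V\cong\R^n$, and the admissible domains for continuous geodesic sections being exactly those avoiding one of the finitely many closed ``branch regions.'' The number of ENRs needed to cover $B$ subject to this constraint is the (ENR) sectional category of the associated branched-covering germ, and I would compute it to equal $2n$—either by an induction on $n$ that recognises, in a slice transverse to the copy of $K_{n-1}$ obtained by freezing one coordinate, the local model of $K_{n-1}$ plus two extra directions, giving $\GC(K_n)\ge\GC(K_{n-1})+2$ and then invoking $\GC(K_2)=4$; or, more self-containedly, by showing that the restriction of the branching data to a small sphere $S^{2n-1}$ around $b$ supports a nonzero cohomology class in degree $2n-1$, which is precisely the obstruction to covering with $2n$ sets.

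Finally I would assemble the count. The local analysis shows that already the $E_i$ meeting a fixed ball around such a $b\in V$ number at least $2n$; and since the dense connected set $K_n\times K_n\setminus C$ of pairs joined by a unique minimal geodesic carries its own (automatically continuous) geodesic section but cannot be absorbed into any of the $2n$ branch-meeting ENRs without destroying continuity at $b$, at least one further ENR is required. Hence $k\ge 2n$, which together with Corollary~\ref{cor} proves $\GC(K_n)=2n$ and completes Theorem~\ref{mainthm}.

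The step I expect to be the crux is the local-to-global passage in the second paragraph: because ENRs (unlike members of an open cover) can be topologically wild, the assertion ``$E$ must avoid the non-minimal pairs near $b$'' has to be upgraded to a quantitative sectional-category estimate insensitive to the ENR-versus-open-cover distinction, and pinning down the homotopy type of the link of $V$—and checking that its genus is exactly $2n$ and not smaller—is where the real work lies.
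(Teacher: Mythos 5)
There is a genuine gap, and it sits exactly where you predict: the claim that the local model at a point $b$ of the vertex stratum forces $2n$ sets is neither proved nor, as stated, correct. If $b=(p(P),p(v_S^{\pm}))$ with $P$ in a top-dimensional region $M_\a$, the paper's own analysis (an induction on the codimension of the walls meeting at the vertex, carried out entirely in the slice $\{p(P)\}\times K_n$) extracts only $n+1$ sets accumulating at $b$; nothing in the normal directions to the stratum at a \emph{generic} base point supplies the remaining $n-1$. The paper gets from $n+1$ to $2n$ only by moving to degenerate base points: it takes $P$ with $a_{n-1}=\frac12$ (and then, inductively, more coordinates equal to $\frac12$), observes that two vertices $V_1,V_2$ of $\cR(P)$ merge into a single vertex $V_0$ as $a_{n-1}\to\frac12$, and shows that every continuous choice of geodesic near the merging vertices is forced to point in the negative $x_{n-1}$ direction when $a_{n-1}$ approaches $\frac12$ from the left and in the positive direction from the right; each such flip costs one additional $E_i$. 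Your stratification by the combinatorial type of $\cR(P)$ never isolates these degenerate loci in the $P$-variable, so the two devices you offer for computing the local count --- a recursion $\GC(K_n)\ge\GC(K_{n-1})+2$ and a nonzero class in degree $2n-1$ on the link --- are not only unestablished (no such recursion for $\GC$ is available, and no cohomology class is actually produced), but are aimed at a quantity that equals $n+1$, not $2n$, at the points you consider.

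The final ``$+1$'' is also not right as stated. You propose that the open dense set of unique-geodesic pairs requires its own ENR, but nothing prevents that set from being distributed among ENRs that also meet the cut locus away from $b$; openness of the complement of the cut locus is not by itself an obstruction. The paper instead takes the connected circle of pairs $(p(\frac12,\dots,\frac12,a_n),p(1,\dots,1,a_n+\frac12))$ lying in the deepest stratum, uses the fact that ENRs are locally closed to force this circle into a single $E_i$ if only $2n$ sets were present, and derives a contradiction from global monodromy: a continuous choice of geodesic over the whole circle would have to switch direction as $a_n$ runs from $0$ to $1$. Some such global connectivity argument is indispensable; a purely local count at one point cannot produce it.
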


\begin{proof}

Assume that there exists a decomposition $X \times X = \bigsqcup_{i=0}^k E_i$ satisfying Definition \ref{gcdef}. We need to show that $k \ge 2n+1$. We will do this in three steps.

\vspace{.15cm}

\textbf{\underline{First part: $k \ge n+1$}}

\vspace{.15cm}

Fix a point $P$ in the universal cover of $K_n$. 
Let $\mathcal{R}$ be the polytope associated to $P$. Fix a vertex $V$ of $\mathcal{R}$. Let $U_\epsilon$ be a ball of (sufficiently small) radius $\epsilon$ around the projection of $V$ in $K_n$.

The ball $U_\epsilon$ is homeomorphic to a ball in $\R^n$ centered around the origin. This ball is divided into open chambers, which correspond to the projection of the interior of $\mathcal{R}$. For example, if $V=V_+$ in Figure \ref{newfig}, which is a rendering of Figure \ref{expl}, the chambers are projections of the small sectors at $V_+$, $C_{--}$, and $C_{+-}$ in the interior of the  hexagon. The diagram also indicates their projection in $K_2$. The chambers are divided by $(n-1)$-walls which correspond to $(n-1)$-dimensional half spaces going through the origin in $\R^n$. The $(n-1)$-walls are the projection of part of the $(n-1)$-skeleton of $\mathcal{R}$. Those walls intersect along lower dimensional walls, which are the projections of the skeletons of the corresponding dimension.

\begin{minipage}{6in}
\begin{fig}\label{newfig}
{\bf Chambers}
\begin{center}
\begin{\tz}[scale=.45]
\draw (0,0) -- (4,0) -- (4,4) -- (0,4) -- (0,0);
\draw [red] (1,3.5) -- (3,2.5) -- (3,-.5) -- (1,-1.5) -- (-1,-.5) -- (-1,2.5) -- (1,3.5);
\node at (1,1)  [circle,fill,inner sep=1pt]{};
\node at (3.8,2.5) {$C_{++}$};
\node at (1,3.7) {$V_+$};
\node at (1,-1.8) {$V_-$};
\node at (-1.8,2.5) {$C_{-+}$};
\node at (-1.8,-.5) {$C_{--}$};
\node at (3.8,-.5) {$C_{+-}$};
\node at (5.5,1) {$\mapright{p}$};
\draw (8,1) -- (8,2.5);
\draw (6.7,.25) -- (8,1) -- (9.3,.25);
\draw [domain=210:330] plot ({1+.6* cos(\x)},{3.5+.6*sin(\x)});
\draw [domain=90:210] plot ({3+.6* cos(\x)},{-.5+.6*sin(\x)});
\draw [domain=-30:90] plot ({-1+.6* cos(\x)},{-.5+.6*sin(\x)});
\end{\tz}
\end{center}
\end{fig}
\end{minipage}

Choose a point in an open chamber. Let $Q$ denote its unique preimage in the interior of the polytope $\mathcal{R}$. There is a unique shortest path between $P$ and $Q$, and its projection to $K_n$ is the unique shortest path between $p(P)$ and $p(Q)$.

Now choose a point $x$ in the interior of an $(n-1)$-wall. Every neighborhood of $x$ intersects the two chambers meeting at this half-space. Suppose we choose two sequences $\la x^1_i\ra$ and $\la x^2_i\ra$ converging to $x$, each sequence contained in a different chamber. For every  $x^1_i$ there is a unique shortest path over $(p(P),x^1_i)$, and analogously for $x^2_i$. However, the shortest paths over $(p(P),x^1_i)$ and over $(p(P),x^2_i)$ converge to two different shortest paths over $(p(P),x)$.  This is because approaching $x$ from each chamber corresponds to approaching two distinct points in the polytope in the universal cover, which are the two preimages of $x$.

Assume, for the sake of contradiction, that there is a neighborhood $W_\epsilon$ of $x$ such that $\{p(P)\}\times W_\epsilon$ is contained in $E_1$. Then we may assume that $E_1$ contains both sequences $(p(P),x^1_i)$ and $(p(P),x^2_i)$, as well as $(p(P),x)$ itself. By the preceding  paragraph, this implies that there cannot exist a continuous choice of shortest path over $E_1$. However, a local section is required to exist over each $E_i$. This shows that every neighborhood of $(p(P),x)$ intersects nontrivially at least two different sets $E_i$. Equivalently, $(p(P),x)$ lies in the closure of at least two $E_i$.

Fix the point $P$ throughout this proof. Denote $\tilde E_i = E_i \cap (\{P\}\times K_n)$ and omit the first coordinate from the notation.

The rest of the argument proceeds by induction: Assume that every point $y$ which is in the interior of an $(n-k+1)$-wall lies in the closure of at least $k$ many $\tilde E_i$. We want to show that every point $x$ in the interior of an $(n-k)$-wall lies in the closure of at least $k+1$ many $\tilde E_i$. Choose a sufficiently small neighborhood $W_\epsilon$ of $x$.  Assume, for the sake of contradiction, that $W_\epsilon$ is contained in $\tilde E_1 \sqcup \tilde E_2 \sqcup \dots \sqcup \tilde E_k$. Furthermore assume that $x$ is in $\tilde E_1$.

For the following argument it is useful to picture a tiling of the universal covering space $\R^n$ by copies of the polytope $\mathcal{R}$, according to the deck transformations (translations and reflections). From this point of view, the chambers can be thought of as the projections of (pieces of)  those polytopes which are adjacent to the vertex $V$.

Let $L$ be the intersection of $W_\epsilon$ with an $(n-k+1)$-wall which contains $x$ in its closure. The point $x$ is in the closure of several $(n-k+1)$-walls and no chamber contains all of those $(n-k+1)$-walls in its boundary.

 Choose a sequence $\la y_i\ra$ in $L$ converging to $x$. Since  $L$ is contained in $\tilde E_1 \sqcup \tilde E_2 \sqcup \dots \sqcup \tilde E_k$, by the induction hypothesis each $y_i$ lies in the closure of $\tilde E_1$ (as well as $\tilde E_2,\ldots,\tilde E_k$). Therefore, there exists a sequence in $\tilde E_1$ converging to $y_i$, which might be constant. By a diagonal argument we can find a sequence $\la x_i\ra$ converging to $x$ such that the entire sequence is either contained in $L$ or contained in the interior of one chamber which is adjacent to $L$.

Now consider the shortest path $s_1(p(P),x)$ given by the local section over $E_1$. By continuity, $s_1(p(P),x)$ needs to coincide with the limit of $s_1(p(P),x_i)$, as $i$ goes to infinity. This implies that the path $s_1(p(P),x)$ needs to intersect a chamber which is adjacent to $L$, by the previous paragraph. Note that if the $x_i$'s were all in $L$, then a subsequence of them has $s_1(p(P),x_i)$ intersecting a single chamber adjacent to $L$.

However, we could repeat the argument with any other $(n-k+1)$-wall which has $x$ in its closure, reaching an analogous conclusion. As we noted, those $(n-k+1)$-walls can be chosen so that they do not have a chamber in common, which yields a contradiction.

This concludes the induction argument. Because $V$ is a vertex (or 0-wall) it follows that the pair $(p(P),p(V))$ lies in the closure of at least $n+1$ many $E_i$.

\vspace{.15cm}

\textbf{\underline{Second part $k \ge 2n$}}

\vspace{.15cm}

Consider a point $P = (a_1,a_2, \dots ,a_n)$ in the universal cover of $K_n$ with coordinates $0 < a_i < {\frac12}$ for $1 \le i \le n-2$, $a_{n-1} = {\frac12}$ and $a_n=0$. Assume that all the $a_i$ are very close to ${\frac12}$ such that there are no middle or truncating vertices or unusual equivalences between vertices in $\cR(P)$.

Let $S$ be the labeled subset of $[\![n-2]\!]$ associated to some vertex $V_0$ of $\mathcal{R}(P)$ and assume that $V_0$ has coordinate $x_{n-1}=0$.

Now fix all coordinates as above except that $0 < a_{n-1} <{\frac12}$. As $a_{n-1}$ approaches ${\frac12}$ from the left, the polytopes of the corresponding points have two vertices approaching the vertex $V_0$. Those vertices correspond to the labeled sets $S$ and $S \cup \{ n-1 \}$ as subsets of $[\![n-1]\!]$ (with $\eps_{n-1}=0$). We will call them $V_1$ and $V_2$ respectively. Keep in mind that $V_1$ and $V_2$ vary as $a_{n-1}$ changes.

Concretely, $V_1$ and $V_2$ only differ at $x_{n-1}$ and $x_n$. The vertex $V_1$ has $x_{n-1} = {\frac12} - a_{n-1}$ and the vertex $V_2$ has coordinate $x_{n-1} = a_{n-1} - \frac12$. As $a_{n-1}$ approaches $\frac12$, both $x_{n-1}$ coordinates approach 0, which is precisely the $x_{n-1}$ coordinate of $V_0$. The $x_n$ coordinates of $V_1$ and $V_2$ differ by $2 \Delta_{a_{n-1}}$ which goes to 0 as $a_{n-1}$ goes to $\frac12$, and the other $\Delta_{a_i}$ summands in (\ref{Kdef}) are equal in $V_1$, $V_2$, and $V_0$.

How many ways are there to make a continuous choice of shortest paths between the projections of the $P = (a_1,a_2, \dots ,a_n)$ and the projections of those three types of vertices? It boils down to choosing paths between the points $P$ and other vertices of $\mathcal{R}(P)$ which are equivalent to $V_1$, $V_2$ and $V_0$ respectively.

We assumed that all vertices are regular and have a small $x_n$ coordinate. By Proposition \ref{equivthm}, such vertices are equivalent to all vertices which correspond either to the same labeled set $S$ and the same $x_n$ or to the complementary labeled set $\St$ and inverted $x_n$.

While there are several consistent choices of shortest paths over the three equivalence classes, all those paths need to go in negative direction in the $n-1$ coordinate. This is because in order for two representatives of the equivalence classes of $V_1$ and $V_2$ to approach each other, one of the two vertices needs to have $x_{n-1}=\frac12- a_{n-1}$ and the other one $x_{n-1} = a_{n-1} - \frac12$. This is illustrated in the case $n=2$ in the left half of Figure \ref{fig:lowerbound}.

\begin{fig}

    \label{fig:lowerbound}
{As $a_1$ approaches $\frac12$, and $V_1$ ($V_1'$) and $V_2$ ($V_2'$) approach the vertex $V_0$ ($V_0'$), the only continuous choice of minimal geodesics requires all geodesics go the left (right) when $a_1$ is approaching $\frac12$ from the left (right).}
  \begin{center}
    \includegraphics[scale=2]{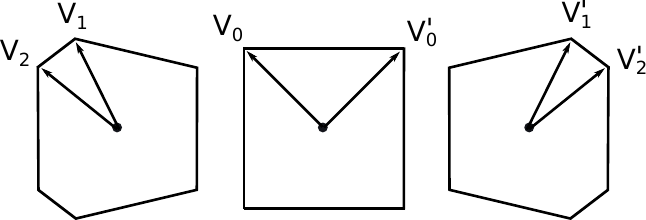}

    \end{center}
  \end{fig}

Repeating the whole procedure except with $\frac12< a_{n-1} < 1$ approaching $\frac12$ from the right instead is completely analogous. However, the conclusion in that case is completely opposite: in all possible continuous choices of shortest paths over the three types of vertices, the paths need to go in positive direction in the $n-1$ coordinate. This is because in that case the coordinates are $x_{n-1} = 1 - (a_{n-1} - \frac12) $ and $x_{n-1} = a_{n-1} + \frac12$.

Now assume, for the sake of contradiction, that there is a ball $U_\epsilon$ around $(p(P_0),V_0)$ which is contained in $E_1 \sqcup \dots \sqcup E_{n+1}$ and that $(p(P_0),V_0)$ lies in $E_1$. In the first part of the proof we showed that at least $n+1$ sets $E_i$ accumulate at every pair of the form $(p(P),p(V))$. In particular, $E_1$ accumulates at every pair $(p(P),p(V))$ contained in $U_\epsilon$.

If $\epsilon$ is small enough, the paths in the image of the section over $E_1\cap U_\epsilon$ need to be very close to the path $s_1(p(P_0),p(V_0))$. In particular, they need to go in the same direction in the $n-1$ coordinate. Because $E_1$ accumulates at all the pairs $(p(P),p(V_1))$ and $(p(P),p(V_2))$ in $U_\epsilon$, this means that there are shortest paths with endpoints very close to $p(V_1)$ and $p(V_2)$ which are close to $s_1(p(P_0),p(V_0))$ and thus close to each other. As we saw above, if two shortest paths from $p(P)$ to $p(V_1)$ and $p(V_2)$ respectively are close to each other they need to go in negative $n-1$ direction if $0 < a_{n-1} <\frac12$ and in positive $n-1$ direction if $\frac12 < a_{n-1} <1$. This implies that the paths cannot all go in the same direction.


%
%
%
%

This yields a contradiction. Therefore, at least $n+2$ many $E_i$ intersect every neighborhood of every pair of the form $(p(P),p(V))$ with $0 < a_i < \frac12$ for $1 \le i \le n-2$ and $a_{n-1} = \frac12$. In other words, at least $n+2$ sets $E_i$ accumulate at every pair of that form.

Note that the assumption $0 < a_i < \frac12$ can be relaxed to $0 < a_i < \frac12$ for some $1\le i \le n-2$ and $\frac12 < a_i < 1$ for other $1\le i \le n-2$, and the argument above still goes through.

The rest of the argument proceeds by induction. The next induction step is to let the coordinate $a_{n-2}$ approach $\frac12$ from the left and the right and show that yet another set $E_i$ is needed in every neighborhood of the corresponding vertex.

After letting all coordinates $a_i$ with $1\le i \le n-1$ approach $\frac12$, one by one, we conclude that every vertex of the polytope $\mathcal{R}(P)$ for $P = (\frac12, \dots , \frac12 , 0)$ is in the closure of $(n+1) + (n-1) = 2n$ many $E_i$.

%
%
%
%
%
%
%
%
%
%
%
%

\vspace{.15cm}

\textbf{\underline{Third part $k \ge 2n+1$}}

\vspace{.15cm}

Let $S_{2n}$ denote the set of all pairs $(p(\frac12, \dots , \frac12 ,  a_n),p(1, \dots , 1 ,  a_n + \frac12))$. We just proved that every such pair of points lies in the closure of $2n$ sets $E_i$.

Assume that $k = 2n$, i.e., there are exactly $2n$ sets $E_i$ in the decomposition of $K_n \times K_n$. Then $S_{2n}$ is contained in the closure of every $E_i$.

Because the $E_i$ are ENRs, they have to be locally closed, which means that they are open in their closure. In particular all the intersections $E_i\cap S_{2n}$ need to be open in $S_{2n}$, as well as disjoint. Because $S_{2n}$ is connected, this implies that if $E_i \cap S_{2n} \neq \emptyset$, then $S_{2n} \subset E_i$. Therefore there is a continuous choice of geodesics over $S_{2n}$. However, to make a continuous choice of path for all pairs $(p(\frac12, \dots , \frac12 ,  a_n),p(1, \dots , 1 ,  a_n + \frac12))$ it is necessary to make a consistent choice of left or right in the first coordinate, say. This is of course not possible for all $a_n$, because as $a_n$ goes from 0 to 1, the directions switch.

\end{proof}

\end{document}